\newcommand{\s}{\sigma}
\newcommand{\ov}{\overline}
\newtheorem{theorem}{Theorem}
\newtheorem{corollary}{Corollary}
\newtheorem{lemma}{Lemma}
\newtheorem{proposition}{Proposition}
\theoremstyle{definition}
\newtheorem{definition}{Definition}
\newtheorem{remark}{Remark}
\newcommand{\beq}{\begin{equation}}
\newcommand{\eeq}{\end{equation}}
\newcommand{\beqs}{\begin{eqnarray*}}
\newcommand{\eeqs}{\end{eqnarray*}}
\newcommand{\beqn}{\begin{eqnarray}}
\newcommand{\eeqn}{\end{eqnarray}}
\newcommand{\beqa}{\begin{array}}
\newcommand{\eeqa}{\end{array}}
\numberwithin{equation}{section}
\numberwithin{theorem}{section}
\numberwithin{lemma}{section}
\numberwithin{remark}{section}
\numberwithin{assumption}{section}
\numberwithin{definition}{section}
\numberwithin{fact}{section}
\numberwithin{question}{section}
\numberwithin{proposition}{section}
\begin{document}

\title{Second order estimates for $\chi$-semi convex solutions of Hessian equations on Hermitian manifolds}

\author{Xiaojuan Chen}
\address{Institute of Mathematics, Hunan University, Changsha 410082, China}
\email{cxj@hnu.edu.cn}

\author{Qiang Tu}
\address{Faculty of Mathematics and Statistics, Hubei Key Laboratory of Applied Mathematics, Hubei University,  Wuhan 430062, P.R. China}
\email{qiangtu@hubu.edu.cn}

\author{Ni Xiang}
\address{Faculty of Mathematics and Statistics, Hubei Key Laboratory of Applied Mathematics, Hubei University,  Wuhan 430062, P.R. China}
\email{nixiang@hubu.edu.cn}



\begin{abstract}
In this paper, we establish the modified concavity inequality for complex Hessian equations
 under the semi-convexity assumption  inspired by Lu \cite{Lu23} and Zhang \cite{Z24} for real case. Then second order estimates for admissible solutions of complex Hessian equations on compact Hermitian manifolds with both sides of equations depending on gradient terms are obtained by taking advantage of the crucial inequality.

\end{abstract}

\keywords{Complex Hessian equations; Second order estimates; Semi-convex; Hermitian manifolds.}
\subjclass[2020]{35J15, 53C55, 58J05, 35B45}
\thanks{Research of Chen was supported by the Postdoctoral Fellowship Program of CPSF under Grant Number GZB20250702;
 Research of Tu and Xiang was supported by funds from
the National Natural Science Foundation of China No. 11971157, 12101206, 12426532.}

\maketitle
\vskip4ex

\section{Introduction}
Let $(M,\omega)$ be a compact complex manifold of complex dimension $n\geq2$ with Hermitian metric $\omega$. For any $u\in C^{\infty}(M)$, we consider a smooth real $(1,1)$-form $\chi'(z,u)$ satisfying the positivity condition $\chi'\geq\varepsilon\omega$ for some $\varepsilon>0$. Given any smooth $(1,0)$-form $a$ on $M$, we obtain a new real $(1,1)$-form
\begin{eqnarray}\label{ch}
\chi(z,u)=\chi'(z,u)+\sqrt{-1}a\wedge\overline{\partial}u-\sqrt{-1}\overline{a}\wedge\partial u+\sqrt{-1}\partial\overline{\partial} u.
\end{eqnarray}
In this paper, we study second order estimates for the following complex Hessian equation
\begin{eqnarray}\label{K-eq}
\chi^{k} \wedge \omega^{n-k} =\psi(z, Du, u) \omega^n
\end{eqnarray}
for $1\leq k\leq n$ since the a priori estimates is a critical step in the use of the continuity method, which is a natural approach to solve equation \eqref{K-eq}. Here $\psi(z, v, u) \in C^{\infty} \left((T^{1,0} (M))^{\ast} \times \mathbb{R}\right)$ is a given positive function, $D$ is the covariant derivative with respect to the metric $\omega$.

The complex Hessian equation \eqref{K-eq} without gradient term on both sides (i.e., $\chi=\chi'+\sqrt{-1}\partial\overline{\partial}u$, $\psi=\psi(z,u)$) has been extensively studied due to its geometric applications such as $J$-flow (see \cite{SW08}) and quaternionic geometry (see \cite{AV10}). Li \cite{Li04} first considered the Dirichlet problem for complex Hessian equations on domain in $\mathbb{C}^n$, then by Blocki \cite{B05}. On compact K\"{a}hler manifolds, Hou \cite{H09} proved the existence of a smooth admissible solution of equation \eqref{K-eq} when $\chi'=\omega,a=0$ and $\psi=\psi(z)$ by assuming the nonnegativity of the holomorphic bisectional curvature of $\omega$. The curvature assumption was subsequently removed by Hou-Ma-Wu \cite{HMW10} and Dinew-Kolodziej \cite{DK14}. On compact Hermitian manifolds, Cherrier \cite{Ch}, Hanani \cite{Ha}, Guan-Li \cite{GL09}, Zhang \cite{Z10}, Tosatti-Weinkove \cite{TW10} and Zhang-Zhang \cite{ZZ11} studied complex Monge-Amp\`{e}re equations. For complex Hessian equations or more general fully nonlinear elliptic equations, the related results can refer to Sun \cite{S14}, Zhang \cite{Z17}, Kolodziej-Nguyen \cite{KN16}, Sz\'{e}kelyhidi \cite{SZ18} and the references therein.

Furthermore, complex Hessian equations with gradient terms on the left hand side arise from the Gauduchon conjecture, which was studied by Sz\'{e}kelyhidi-Tosatti-Weinkove \cite{STW17}, and see also Guan-Nie \cite{GN23} for related work. Complex Monge-Amp\`{e}re equations with gradient terms inside the determinant can be found in the study of the Calabi-Yau equation on certain symplectic non-K\"{a}hler 4-manifolds, see Fino-Li-Salamon-Vezzoni \cite{FLSV13}. The related work can also see Tosatti-Weinkove \cite{TW191}, where they solved complex Monge-Amp\`{e}re equations with gradient terms left open by Yuan \cite{Yuan18}. More results for complex Hessian equations or general fully nonlinear elliptic equations with linear gradient terms on the left hand side can refer to Zhang \cite{Z10}, Feng-Ge-Zheng \cite{FGZ19} and Yuan \cite{Yuan18,Yuan20}.

It is worth noting that in the above mentioned works, the right hand side function $\psi$ is independent of $Du$. In fact, equation \eqref{K-eq} with $\psi=\psi(z,Du,u)$ has been relatively less studied. An important case corresponding to $k=n=2$ (i.e., complex Monge-Amp\`{e}re equations in two dimensions) was solved by Fu-Yau \cite{FY07, FY08} of a Strominger system on a toric fibration over a K3 surface. For general dimension $n$, the corresponding results on compact K\"{a}hler manifolds or Hermitian manifolds can refer to Phong-Picard-Zhang \cite{PPZ16, PPZ17, PPZ19, PPZ21}, Chu-Huang-Zhu \cite{CHZ19, CHZ2019}.
Furthermore, complex Hessian equations with gradient terms on both sides were studied by Dong-Li \cite{DL19} on Hermitian manifolds, and second order estimates were obtained for $\chi\in \Gamma_n(M)$. Subsequently, Dong \cite{D21, Do21} proved second order estimates under a weaker convexity assumption $\chi\in\Gamma_{k+1}(M)$ for  equation \eqref{K-eq} and a class of general Hessian equations on Hermitian manifolds, respectively.

From analysis point of view, a natural problem is weather we can weaken the assumption $\chi\in\Gamma_{k+1}(M)$ in \cite{D21} to a more natural semi-convex assumption inspired by Lu \cite{Lu23} and Zhang \cite{Z24}. Now we define $\chi$-semi convex function on Hermitian manifolds, which generalizes the definition of semi-convex function for real case.

\begin{definition}
Let $h\in \mathcal{A}^{1,1}(M)$ be a smooth real $(1,0)$-form on $(M,\omega)$. It can be written as $h=\sqrt{-1}h_{i\overline{j}}dz^i\wedge d\overline{z}^j$ in a local coordinate. We say $h$ is semi-convex, if there exists a constant $A>0$ such that
$$\lambda_k(h_{i\overline{j}})>-A, ~1\leq k\leq n,~ \forall~x\in M.$$
And we call the function $u$ is $\chi$-semi convex if $\chi$  defined in \eqref{ch} is semi-convex.
\end{definition}

\begin{definition}
A function $u\in C^4(M)$ is called an admissible solution of equation \eqref{K-eq} if $\chi\in \Gamma_k(M)$. The G$\mathring{\text{a}}$rding's cone on $M$ is defined by
$$\Gamma_k(M)=\{\chi\in \mathcal{A}^{1,1}(M):\sigma_i(\chi)>0,~\forall~1\leq i\leq k\},$$
where $\mathcal{A}^{1,1}(M)$ is the space of real smooth $(1,1)$-forms on Hermitian manifold $(M,\omega)$.
\end{definition}

The main theorem is as follows.
\begin{theorem}\label{main}
Let $(M,\omega)$ be a compact Hermitian manifold of complex dimension $n$. Suppose $\chi'(z,u)\geq \varepsilon\omega$ and $u\in C^{\infty}(M)$ is an admissible, $\chi$-semi convex solution of equation \eqref{K-eq}. Then we have uniform second order estimates
$$|D\overline{D}u|\leq C,$$
where $C$ is a uniform constant depending on $(M, \omega),n,k,\varepsilon,\chi',\psi,a,\sup_M|u|$, $\sup_M|Du|$.
\end{theorem}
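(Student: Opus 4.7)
The plan is a standard maximum-principle attack on a test function, with the key new input being the modified concavity inequality announced in the abstract. Set
\[
Q := \log \lambda_1 + \phi(|Du|^2) + \psi(u),
\]
where $\lambda_1$ is the largest eigenvalue of the Hermitian endomorphism $g^{j\bar k}\chi_{i\bar k}$ and $\phi,\psi\in C^2(\R)$ are auxiliary functions to be chosen; typical choices are $\phi(t)=-\tfrac12\log(1-t/(2K))$ with $K=1+\sup_M|Du|^2$ (a convex increasing function with both $\phi'$ and $\phi''/(\phi')^2$ controlled) and $\psi(u)=-Au$ for a large constant $A$. Their roles are, respectively, to absorb the gradient error arising from the $Du$-dependence of $\psi(z,Du,u)$ and the first-order twist $\sqrt{-1}a\wedge \bpartial u - \sqrt{-1}\bar a\wedge\partial u$ in $\chi$, and to produce a favourable zeroth-order term. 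Let $x_0\in M$ be a maximum point of $Q$; by the usual perturbation argument we may suppose $\lambda_1(x_0)>\lambda_2(x_0)$, so $\lambda_1$ is smooth near $x_0$.

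Choose local holomorphic coordinates at $x_0$ in which $g_{i\bar j}(x_0)=\d_{ij}$, $\partial_k g_{i\bar j}(x_0)=0$, and $\chi_{i\bar j}(x_0)=\lambda_i\d_{ij}$ with $\lambda_1\ge\cdots\ge\lambda_n$. Writing $F(\chi):=\log\sigma_k(\chi)$ and $F^{i\bar j}=\partial F/\partial\chi_{i\bar j}$, equation \eqref{K-eq} becomes $F(\chi)=\log\psi+\text{const}$. The critical relation $Q_i(x_0)=0$ gives
\[
\frac{\chi_{1\bar 1,i}}{\lambda_1}+\phi'(|Du|^2)_i+\psi' u_i=0,
\]
while $F^{i\bar i}Q_{i\bar i}(x_0)\le 0$, together with the classical formula for the second derivative of a simple eigenvalue, produces
\[
0\ge F^{i\bar i}\!\left(\tfrac{\chi_{1\bar 1,i\bar i}}{\lambda_1}+\sum_{p>1}\tfrac{|\chi_{1\bar p,i}|^2+|\chi_{p\bar 1,i}|^2}{\lambda_1(\lambda_1-\lambda_p)}-\tfrac{|\chi_{1\bar 1,i}|^2}{\lambda_1^2}\right)+F^{i\bar i}\bigl(\phi''(|Du|^2)_i(|Du|^2)_{\bar i}+\phi'(|Du|^2)_{i\bar i}+\psi''|u_i|^2+\psi' u_{i\bar i}\bigr).
\]

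Next, commute covariant derivatives in $\chi_{1\bar 1,i\bar i}$ (producing curvature and torsion terms of $(M,\o)$ and of the twist form $a$, all bounded by $C(1+\lambda_1)\sum_i F^{i\bar i}$) and differentiate $F(\chi)=\log\psi$ twice in the $\partial_1$-direction to obtain
\[
F^{i\bar i}\chi_{i\bar i,1\bar 1}+F^{i\bar j,k\bar l}\chi_{i\bar j,1}\chi_{k\bar l,\bar 1}=(\log\psi)_{1\bar 1}.
\]
The chain rule on $\psi(z,Du,u)$ generates terms of the form $\psi_{v_j}\partial^2 Du$ and $\psi_{v_jv_l}|\partial Du|^2$, to be absorbed via the critical identity and the $\phi$-derivatives. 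The decisive step is to estimate
\[
-F^{i\bar j,k\bar l}\chi_{i\bar j,1}\chi_{k\bar l,\bar 1}+\sum_{p>1}F^{i\bar i}\tfrac{|\chi_{1\bar p,i}|^2+|\chi_{p\bar 1,i}|^2}{\lambda_1(\lambda_1-\lambda_p)}
\]
from below, and for this I invoke the modified concavity inequality of this paper, established (following Lu \cite{Lu23} and Zhang \cite{Z24}) under the semi-convexity hypothesis $\lambda_p>-A$. It yields, up to error controlled by $C\sum_i F^{i\bar i}$, a bound of the form $\tfrac{\d}{\lambda_1^2}\sum_i F^{i\bar i}|\chi_{1\bar 1,i}|^2$ for some $\d=\d(n,k,A)>0$. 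The semi-convex bound is precisely what prevents the small-denominator and sign-change pathologies that would otherwise defeat the argument under mere $\chi\in\G_k$.

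Substituting the critical identity into that lower bound absorbs the quadratic gradient errors from $\psi_{v_jv_l}$ and from the twist form $a$. The remaining terms $\phi'(|Du|^2)_{i\bar i}$ and $\psi' u_{i\bar i}$ contribute, respectively, a positive multiple of $\phi'\lambda_1\sum_i F^{i\bar i}$ (after using Cauchy--Schwarz on commutators with $\chi$) and $\psi'\sum_i F^{i\bar i}(\lambda_i-\chi'_{i\bar i})$. Choosing first $A$ and then $K$ large in terms of $\e,\sup|u|,\sup|Du|$, and the geometry, the balance becomes
\[
0\ge \e'\lambda_1\sum_i F^{i\bar i}-C\sum_i F^{i\bar i}-C.
\]
Since $\chi\in\G_k$ forces $\sum_i F^{i\bar i}\ge c(n,k)>0$ on a level set $F=\log\psi$ with bounded $\psi$, we conclude $\lambda_1(x_0)\le C$, hence $\sup_M\lambda_1\le C$; combined with the lower bound $\lambda_n\ge -A$ coming from semi-convexity this gives $|D\bar D u|\le C$. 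The main obstacle is paragraph three: in the $\G_{k+1}$ framework of Dong \cite{D21} all $F^{i\bar i}$ enjoy a uniform positive lower bound and the cubic concavity terms are automatically well-signed, whereas here one must split indices according to the ratio $\lambda_i/\lambda_1$ and the sign/size of $\lambda_p$, separately handle the regimes where some $F^{i\bar i}$ are small and where some $\lambda_p$ are negative but $\ge -A$, and reassemble using the quantitative lower bound. Monitoring the non-Kähler torsion and the twist form $a$ throughout adds bookkeeping but produces only absorbable error.
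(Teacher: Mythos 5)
Your strategy coincides with the paper's: a maximum principle for $\log\lambda_1+\varphi(|Du|^2)+\phi(u)$ with the Chu-type perturbation of $\chi$ making $\lambda_1$ smooth at the maximum, two differentiations of the equation, and the modified concavity inequality under semi-convexity as the engine. The gap is in how you cancel the term $-\sum_i\sigma_k^{i\overline{i}}|D_i\chi_{1\overline{1}}|^2/\lambda_1^2$ coming from $D\overline{D}\log\lambda_1$, which carries coefficient $-1$ and is the whole difficulty. You assert that the concavity lemma yields, up to $C\mathcal{F}$, a bonus of the form $\frac{\delta}{\lambda_1^2}\sum_i\sigma_k^{i\overline{i}}|D_i\chi_{1\overline{1}}|^2$ for some small $\delta>0$, and you then spend this bonus on the gradient errors. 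That leaves the $-1$ coefficient essentially intact, and nothing else in your sketch controls it: absorbing it through the critical equation alone would force $\varphi''\gtrsim(\varphi')^2$ \emph{and} $\psi''\gtrsim(\psi')^2$ for your zeroth-order function, which your linear choice $\psi(u)=-Au$ (with $\psi''=0$) cannot supply. What Lemma \ref{key lemma} actually provides, applied only in the direction $j=1$, is a recovery of $(1-\varepsilon_0)\,\sigma_k^{1\overline{1}}|D_1\chi_{1\overline{1}}|^2/(\lambda_1\sigma_k)$ --- coefficient close to the full $1$, not a small $\delta$ --- and it is purchased by sacrificing the good terms $(1-\varepsilon_0)\sum_{p>1}\sigma_k^{p\overline{p}}|D_1\chi_{p\overline{p}}|^2/(\lambda_1\sigma_k)$, which must then be matched, after commuting $D_1\chi_{p\overline{p}}$ into $D_p\chi_{1\overline{p}}$ up to torsion and $a$-terms, against the eigenvalue second-derivative terms $\sum_{p>1}\sigma_k^{p\overline{p}}|D_p\chi_{1\overline{p}}|^2/(\lambda_1(\lambda_1-\widetilde{\lambda}_p))$. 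Semi-convexity enters precisely there, since $\lambda_1-\widetilde{\lambda}_p\leq\lambda_1+A+1\leq(1+\beta)\lambda_1$ for $\lambda_1$ large. The directions $i>1$ of $\sigma_k^{i\overline{i}}|D_i\chi_{1\overline{1}}|^2/\lambda_1^2$ are handled separately by the remaining eigenvalue terms and the commutation $D_1\chi_{p\overline{1}}\approx D_p\chi_{1\overline{1}}$.

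Because the recovery is only $(1-\varepsilon_0)$, a residual deficit of order $\varepsilon_0\sum_i\sigma_k^{i\overline{i}}|D_i\chi_{1\overline{1}}|^2/\lambda_1^2$ survives, and the paper closes it with the strict convexity of the $u$-part of the test function: taking $\phi(u)=e^{\Lambda(T-u)}$, so that $\phi''=(\phi')^2/\phi$, the critical equation gives $\phi''\sigma_k^{i\overline{i}}|u_i|^2\geq\frac{\phi''}{2(\phi')^2}\sigma_k^{i\overline{i}}|D_i\chi_{1\overline{1}}|^2/\lambda_1^2-\dots$, and one then chooses $\varepsilon_0$ comparable to $\phi''/(\phi')^2$; the gradient function must simultaneously satisfy $\varphi''-2\phi''(\varphi')^2/(\phi')^2>0$, which couples the two choices ($\varphi=e^{Ns}$ with $\Lambda\gg N$). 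So you must replace $\psi(u)=-Au$ by a strictly convex function and track the $(1-\varepsilon_0)$ constants rather than a generic $\delta$; with those repairs your outline becomes the paper's proof.
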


It is easy to see that $\sigma_{k+1}(\chi)>-A$ for some positive constant $A$ implies the $\chi$-semi convex condition according to Lemma 2.2 in \cite{Z24}.
\begin{corollary}
Let $(M,\omega)$ be a compact Hermitian manifold of complex dimension $n$. Suppose $\chi'(z,u)\geq \varepsilon\omega$ and $u\in C^{\infty}(M)$ is an admissible solution of equation \eqref{K-eq} with $\sigma_{k+1}(\chi)>-A$ for some positive constant $A$. Then we have uniform second order estimates
$$|D\overline{D}u|\leq C,$$
where $C$ is a uniform constant depending on $(M, \omega),n,k,\varepsilon,\chi',\psi,a,\sup_M|u|$, $\sup_M|Du|$.
\end{corollary}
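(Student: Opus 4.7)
\smallskip

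The plan is to reduce the corollary to Theorem~\ref{main} by verifying that the pointwise inequality $\sigma_{k+1}(\chi)>-A$ forces $\chi$ to be semi-convex in the sense of Definition~1.1 with a constant depending only on $A$, $n$, $k$ and $\sup_{M}\sigma_{1}(\chi)$ (which itself is controlled since $\chi$ lies in $\Gamma_{k}(M)$ and the solution $u$ has bounded gradient, so $\chi$ is pointwise bounded above by $\chi'+\sqrt{-1}\partial\bar\partial u$ plus a term controlled by $|a||Du|$). Once semi-convexity is in hand, the Main Theorem applies verbatim and delivers the required estimate.

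For the algebraic implication, I would invoke Lemma~2.2 of \cite{Z24} precisely as stated in the remark preceding the corollary. Spelling out why this lemma is applicable pointwise: at each $x\in M$, diagonalize $\chi$ with respect to $\omega$ to obtain eigenvalues $\lambda=(\lambda_{1},\dots,\lambda_{n})$ ordered so that $\lambda_{1}\geq\cdots\geq\lambda_{n}$. Admissibility gives $\sigma_{i}(\lambda)>0$ for $1\leq i\leq k$, and the hypothesis gives $\sigma_{k+1}(\lambda)>-A$. If $\lambda_{n}<0$, expand
\begin{equation*}
\sigma_{k+1}(\lambda)=\lambda_{n}\,\sigma_{k}(\lambda\,|\,n)+\sigma_{k+1}(\lambda\,|\,n),
\end{equation*}
and note that by the standard trace inequalities in the G\aa{}rding cone one has $\sigma_{k}(\lambda\,|\,n)\geq c(n,k)\,\sigma_{k}(\lambda)^{(k-1)/k}\,\sigma_{1}(\lambda)^{1/k}$ and $\sigma_{k+1}(\lambda\,|\,n)\leq C(n,k)\,\sigma_{1}(\lambda)^{k+1}$, so rearranging yields
\begin{equation*}
\lambda_{n}\geq\frac{-A-\sigma_{k+1}(\lambda\,|\,n)}{\sigma_{k}(\lambda\,|\,n)}\geq -A'
\end{equation*}
for $A'$ depending on $A$, $n$, $k$ and $\sup_{M}\sigma_{1}(\chi)$. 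Since $\lambda_{n}$ is the smallest eigenvalue this controls every $\lambda_{j}$ from below, so $\chi$ is semi-convex with constant $A'$.

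With the semi-convexity of $\chi$ established, every hypothesis of Theorem~\ref{main} is satisfied, and the estimate $|D\bar D u|\leq C$ follows with a constant depending on the listed data plus $A$ (through the semi-convexity constant $A'$). Since the upper bound on $\sigma_{1}(\chi)$ that enters the algebraic step can be absorbed into the final second order estimate via a standard bootstrap (or alternatively simply included in the dependence), this step is routine and imposes no new geometric obstruction. The real difficulty of the paper is already encapsulated in Theorem~\ref{main}, namely the modified concavity inequality for the operator $\sigma_{k}^{1/k}$ under only the semi-convexity hypothesis; the corollary merely repackages that result under the slightly stronger but more frequently verifiable condition $\sigma_{k+1}(\chi)>-A$, so once one has proven the main theorem there is essentially nothing left to do beyond citing the algebraic lemma from \cite{Z24}.
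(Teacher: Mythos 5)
Your overall reduction is exactly the paper's: the corollary is proved by observing that $\sigma_{k+1}(\chi)>-A$ together with $\chi\in\Gamma_k(M)$ forces a uniform lower bound on the smallest eigenvalue (this is Lemma~2.2 of \cite{Z24}), so that Theorem~\ref{main} applies verbatim. Had you simply cited that lemma, there would be nothing to add.

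However, the way you attempt to justify the algebraic step is flawed, and the flaw matters. The two ``standard trace inequalities'' you invoke are not standard, and the second one is false as stated: $\sigma_{k+1}(\lambda|n)\leq C(n,k)\,\sigma_1(\lambda)^{k+1}$ fails because $\sigma_1(\lambda)$ does not control $\lambda_1$ from above for $\lambda\in\Gamma_k$ (take, e.g., $\lambda=(N,N,-2N+1)\in\Gamma_1$, where $\sigma_1=1$ but $\sigma_2(\lambda|3)=N^2$). More importantly, any bound you extract this way depends on $\sup_M\sigma_1(\chi)$, i.e.\ on $\lambda_1$, which is precisely the quantity the corollary is estimating; your suggestion to ``absorb this into the final estimate via a standard bootstrap'' is circular, and adding $\sup_M\sigma_1(\chi)$ to the list of dependencies would change the statement. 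The whole point of \cite[Lemma~2.2]{Z24} is that the lower bound on $\lambda_n$ is \emph{independent of} $\lambda_1$, depending only on $n,k,A$ and the bounds on $\sigma_k(\chi)=\psi$. A correct elementary argument runs as follows: from $\sigma_k(\lambda)>0$ one gets $\sigma_k(\lambda|n)=\sigma_k(\lambda)+|\lambda_n|\sigma_{k-1}(\lambda|n)$ when $\lambda_n<0$, hence
\begin{equation*}
-A<\sigma_{k+1}(\lambda)=\lambda_n\sigma_k(\lambda|n)+\sigma_{k+1}(\lambda|n)
\leq -|\lambda_n|^2\sigma_{k-1}(\lambda|n)-|\lambda_n|\sigma_k(\lambda)+\sigma_{k+1}(\lambda|n),
\end{equation*}
and one controls $\sigma_{k+1}(\lambda|n)$ by Newton's inequality $\sigma_{k+1}(\lambda|n)\sigma_{k-1}(\lambda|n)\leq\theta\,\sigma_k(\lambda|n)^2$ with $\theta=\theta(n,k)<1$; substituting $\sigma_k(\lambda|n)=\sigma_k(\lambda)+|\lambda_n|\sigma_{k-1}(\lambda|n)$ and using the lower bound $\sigma_{k-1}(\lambda|n)\geq\frac{n-k+1}{n}\sigma_{k-1}(\lambda)\geq c(n,k)\sigma_k(\lambda)^{(k-1)/k}$, the quadratic term $|\lambda_n|^2\sigma_{k-1}(\lambda|n)$ survives with coefficient $1-\theta>0$ and yields $|\lambda_n|\leq A'(n,k,A,\inf\psi,\sup\psi)$ with no reference to $\lambda_1$. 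As written, your proposal does not establish the semi-convexity hypothesis uniformly, so the appeal to Theorem~\ref{main} is not yet justified.
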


Due to the presence of $Du$ in both sides of equation \eqref{K-eq}, the prime challenge in establishing second order estimates is managing terms such as $|DDu|^2$, $|D\overline{D}u|^2$ and some bad third order terms when differentiating the equation twice. However, we cannot directly control bad terms as in the real case owing to complex conjugacy. Furthermore, in comparison to calculations on K\"{a}hler manifolds, a greater number of undesirable third order terms in the form of $T\ast D^3u$ are produced, here $T$ is the torsion of $\omega$ on Hermitian manifolds.  To surmount these obstacles, it is essential  to establish a concavity inequality. Lu \cite{Lu23} established curvature estimates for semi-convex solutions of real Hessian equations based on the following concavity inequality
\begin{equation}\label{Lu}
  -\sum_{p\neq q}\sigma_k^{pp,qq}\xi_p\xi_q+\frac{(\sum_i\sigma_k^{ii}\xi_i)^2}{\sigma_k}
  +\delta_0\sum_{i>l}\frac{\sigma_k^{ii}\xi_i^2}{\lambda_1}\geq(1-\epsilon)\frac{\sigma_k\xi_1^2}{\lambda_1^2},
\end{equation}
where $\xi=(\xi_1,\cdots,\xi_n)$ is an arbitrary vector in $\mathbb{R}^n$.
Recently, Zhang \cite{Z24} improved the
concavity inequality \eqref{Lu} and derived the following Lemma.
\begin{lemma}[\cite{Z24}]
Assume $\{\lambda_i\}\in\Gamma_k$, $\lambda_1\geq\lambda_2\geq\cdots\geq\lambda_n$ and $\lambda_n>-A$ for a constant $A>0$. Then there exists some constant $C>0$ depending on $n,k,\sigma_k,A$ and small constant $\delta_0>0$ depending on $k$ such that if $\lambda_1\geq C$, then the following inequality holds
\begin{equation}\label{Zhang}
-\sum_{p\neq q}\sigma_k^{pp,qq}\xi_p\xi_q+K\frac{(\sum_i\sigma_k^{ii}\xi_i)^2}{\sigma_k}+2\sum_{i>1}
\frac{\sigma_k^{ii}\xi_i^2}{(\lambda_1+A+1)}\geq(1+\delta_0)\frac{\sigma_k^{11}\xi_1^2}{\lambda_1}
\end{equation}
for some sufficiently large $K>0$ (depending on $\delta_0$), where $\xi=(\xi_1,\cdots,\xi_n)$ is an arbitrary vector in $\mathbb{R}^n$.
\end{lemma}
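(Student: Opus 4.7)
The plan is to prove the refined concavity inequality by combining the standard decomposition of the second derivatives of $\sigma_k$ with a dichotomy based on the size of $\lambda_1$ relative to the other eigenvalues, leveraging the semi-convexity hypothesis $\lambda_n>-A$ to absorb the negative contributions from the smaller eigenvalues.

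First I would split the double sum by singling out the index $1$:
\begin{equation*}
-\sum_{p\neq q}\sigma_k^{pp,qq}\xi_p\xi_q = -2\sum_{q>1}\sigma_k^{11,qq}\xi_1\xi_q - \sum_{\substack{p,q>1 \\ p\neq q}}\sigma_k^{pp,qq}\xi_p\xi_q.
\end{equation*}
For the inner block with $p,q>1$, I would apply the classical concavity identity for $\sigma_k^{1/k}$ restricted to the $(n-1)$-tuple $(\lambda|1)$, which generates a term of the form $(\sum_{i>1}\sigma_k^{ii}\xi_i)^2/\sigma_k(\lambda|1)$. Using $\sigma_k(\lambda|1)=\sigma_k-\lambda_1\sigma_k^{11}$ together with the Newton--Maclaurin inequalities (all valid thanks to the cone condition and $\lambda_n>-A$), this quantity is controlled by $K(\sum_i\sigma_k^{ii}\xi_i)^2/\sigma_k$ once $K$ is chosen large. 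For the cross terms $\sigma_k^{11,qq}\xi_1\xi_q$ I would use a weighted Cauchy--Schwarz that absorbs the $\xi_q^2$ part into the auxiliary term $2\sum_{i>1}\sigma_k^{ii}\xi_i^2/(\lambda_1+A+1)$ and routes the residual $\xi_1^2$ coefficient to be compared with $(1+\delta_0)\sigma_k^{11}/\lambda_1$.

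To obtain the improved constant $(1+\delta_0)$ on the right-hand side, as opposed to Lu's $(1-\varepsilon)$ in \eqref{Lu}, I would distinguish two regimes. In the dominant regime where $\lambda_i\leq\tau\lambda_1$ for every $i>1$ with some small threshold $\tau$, the inequality $\lambda_q+A+1\leq(1+\tau)(\lambda_1+A+1)$ together with $\sigma_k^{qq}\gg\sigma_k^{11,qq}$ makes the Cauchy--Schwarz cost clean, and the surplus comes precisely from $\lambda_1/(\lambda_1+A+1)\to 1$ as $\lambda_1\to\infty$. In the balanced regime where at least one $\lambda_i$ with $i>1$ is comparable to $\lambda_1$, the Newton--Maclaurin ratios $\sigma_k^{ii}/\sigma_k^{11}$ are bounded below uniformly, and the term $K(\sum_i\sigma_k^{ii}\xi_i)^2/\sigma_k$ absorbs both the cross terms and the deficit.

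The main obstacle will be choosing the parameters in the correct order and tracking how the semi-convexity bound $\lambda_n>-A$ propagates through the Newton--Maclaurin corrections. Concretely, $\delta_0$ must be fixed small depending only on $k$ and $n$; then $K$ is chosen large enough to absorb the mixed $\xi_1\xi_q$ contributions; finally the threshold $C$ for $\lambda_1$ must be taken large in terms of $K$, $A$, and $\sigma_k$ so that $\lambda_1/(\lambda_1+A+1)$ is as close to $1$ as required and $\sigma_k(\lambda|1)/(\lambda_1\sigma_k^{11})$ is as small as required. The delicate point is ensuring that the usual Newton--Maclaurin inequalities remain effective in the presence of eigenvalues as negative as $-A$, which is exactly the role played by the shift $A+1$ in the denominator of the auxiliary term.
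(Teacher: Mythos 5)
Your overall architecture (isolate the index $1$, control the $p,q>1$ block by concavity, absorb cross terms by weighted Cauchy--Schwarz, then run a dichotomy) is reasonable in outline, but two of its load-bearing steps do not work as described, and they are exactly where the difficulty of the lemma lives. First, the ``classical concavity identity for $\sigma_k^{1/k}$ restricted to $(\lambda|1)$'' is not available: $\lambda\in\Gamma_k(\mathbb{R}^n)$ only guarantees $(\lambda|1)\in\Gamma_{k-1}(\mathbb{R}^{n-1})$, and $\sigma_k(\lambda|1)$ may well be negative, so the quantity $(\sum_{i>1}\sigma_k^{ii}\xi_i)^2/\sigma_k(\lambda|1)$ you propose to generate has the wrong sign precisely in the hard regime. (Note also that for $p,q>1$ one has $\sigma_k^{pp,qq}=\sigma_{k-2}(\lambda|pq)=\sigma_{k-2}(\lambda|1pq)+\lambda_1\sigma_{k-3}(\lambda|1pq)$, so the inner block is not the Hessian of $\sigma_k(\lambda|1)$ in the remaining variables.) The case $\sigma_k(\lambda|1)<-c_0\sigma_k$ is the one that forces all the work; the argument this paper actually relies on (the lemma is quoted from \cite{Z24} without proof, and the related Lemma 3.1 is proved in Section 3 by importing (3.27) and (3.73) of \cite{Z24}) splits on the size of $\sigma_k(\lambda|1)/\sigma_k$, not on whether $\lambda_i\le\tau\lambda_1$ for $i>1$, and in the bad case extracts the gain from $\lambda_1\sigma_k^{11}-\sigma_k>c_0\sigma_k$.

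Second, you give no mechanism that produces the strict surplus $\delta_0>0$, which is the whole point of the lemma. The source you name, $\lambda_1/(\lambda_1+A+1)\to 1$, approaches $1$ from below and can never push the coefficient above $1$. In the actual argument the surplus comes from the explicit constant $\frac{(k+2)^2}{(k+1)(k+3)}=1+\frac{1}{(k+1)(k+3)}$ in the refined quadratic-form estimate of \cite{Z24} in the good case, and from $\frac{\lambda_1\sigma_k^{11}-\sigma_k}{\sigma_k}>c_0$ in the bad case; neither is a consequence of Cauchy--Schwarz alone. Relatedly, your fallback in the ``balanced regime'' --- that $K\left(\sum_i\sigma_k^{ii}\xi_i\right)^2/\sigma_k$ absorbs both the cross terms and the deficit --- collapses when $\xi$ is chosen with $\sum_i\sigma_k^{ii}\xi_i=0$, which is permitted since $\xi$ is arbitrary; then that term vanishes for every $K$ and you must extract $(1+\delta_0)\sigma_k^{11}\xi_1^2/\lambda_1$ from the concavity term and the small auxiliary term alone. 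As written, the proposal would not close.
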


By comparing the above two inequalities \eqref{Lu} and \eqref{Zhang}, each of these two inequalities has its own advantages. Specifically, we find that the term $\delta_0\sum_{i>l}\frac{\sigma_k^{ii}\xi_i^2}{\lambda_1}$ in \eqref{Lu} can be easier to control than the term $2\sum_{i>1}\frac{\sigma_k^{ii}\xi_i^2}{(\lambda_1+A+1)}$ in \eqref{Zhang} since $\delta_0$ can be chosen arbitrarily small. However, $\frac{\sigma_k\xi_1^2}{\lambda_1^2}$ in \eqref{Lu} will lead to the bad  term $\sigma_k(\lambda|1)$, which is unmanageable term without the assumption $\chi\in \Gamma_{k+1}(M)$.
It seems that the term $(1+\delta_0)\frac{\sigma_k^{11}\xi_1^2}{\lambda_1\sigma_k}$ in \eqref{Zhang} is a higher choice. For complex Hessian equations, some good third order terms will be lost due to complex conjugacy. Therefore, we will fully absorb the processing techniques of the above two inequalities, and optimize the coefficients in \eqref{Zhang} from $2$ to $1-\varepsilon_0$ for some small $\varepsilon_0>0$ while retaining the good terms.
Then we establish the modified concavity inequality.
\begin{lemma}\label{key lemma}
Let $W=(\omega_{p\overline{q}})$ be a Hermitian tensor and its eigenvalues $\lambda(W)=(\lambda_1,\cdots,\lambda_n)\in \Gamma_k$ satisfying $\lambda_1\geq\cdots\geq\lambda_n>-A$ for a constant $A>0$. For any $\varepsilon_0\in(0,1)$, if $\lambda_1$ sufficiently large, then for $j=1,\cdots,n$ and some positive constant $K$ depending only on $k$, the following inequality holds
\begin{equation}\label{key}
  -\sum_{p\neq q}\frac{\sigma_k^{p\overline{p},q\overline{q}}\omega_{p\overline{p}j}\omega_{q\overline{q}\overline{j}}}{\sigma_k}+K\frac{|D_j\sigma_k|^2}{\sigma_k^2}
  +(1-\varepsilon_0)\sum_{i>1}\frac{\sigma_k^{i\overline{i}}|\omega_{i\overline{i}j}|^2}{\lambda_1\sigma_k}\geq(1-\varepsilon_0)
  \frac{\sigma_k^{1\overline{1}}|\omega_{1\overline{1}j}|^2}{\lambda_1\sigma_k}.
\end{equation}
\end{lemma}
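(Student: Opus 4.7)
The plan is to prove the lemma in three stages: diagonalize $W$ at the point, reduce the complex Hermitian inequality to a real algebraic one via real/imaginary decomposition, and then establish the real inequality by a careful modification of Zhang's argument for~\eqref{Zhang} with sharpened Cauchy--Schwarz parameters.

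First, at the given point I would choose a unitary frame in which $W=(\omega_{p\bar q})$ is diagonal with eigenvalues $\lambda_1\geq\cdots\geq\lambda_n>-A$, and write $\xi_p:=\omega_{p\bar pj}$. Using $\sigma_k^{p\bar p}=\sigma_{k-1}(\lambda|p)$, $\sigma_k^{p\bar p,q\bar q}=\sigma_{k-2}(\lambda|pq)$ for $p\neq q$ (and vanishing otherwise), together with $D_j\sigma_k=\sum_p\sigma_k^{p\bar p}\xi_p$, the inequality~\eqref{key} (after clearing $\sigma_k>0$) becomes
\begin{equation*}
-\sum_{p\neq q}\sigma_k^{p\bar p,q\bar q}\xi_p\bar\xi_q+K\frac{\bigl|\sum_p\sigma_k^{p\bar p}\xi_p\bigr|^2}{\sigma_k}+(1-\varepsilon_0)\sum_{i>1}\frac{\sigma_k^{i\bar i}|\xi_i|^2}{\lambda_1}\geq(1-\varepsilon_0)\frac{\sigma_k^{1\bar 1}|\xi_1|^2}{\lambda_1}.
\end{equation*}
Decomposing $\xi_p=a_p+\sqrt{-1}\,b_p$ and using the symmetry $\sigma_k^{p\bar p,q\bar q}=\sigma_k^{q\bar q,p\bar p}$, the imaginary part of $\sum_{p\neq q}\sigma_k^{p\bar p,q\bar q}\xi_p\bar\xi_q$ vanishes, so this sum equals $\sum_{p\neq q}\sigma_k^{p\bar p,q\bar q}(a_pa_q+b_pb_q)$; similarly $\bigl|\sum_p\sigma_k^{p\bar p}\xi_p\bigr|^2=(\sum_p\sigma_k^{p\bar p}a_p)^2+(\sum_p\sigma_k^{p\bar p}b_p)^2$. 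Thus it suffices to establish the corresponding real analogue for an arbitrary $\eta\in\mathbb{R}^n$, and then apply it to $\eta=a$ and $\eta=b$ and sum.

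Second, for the real analogue I would adapt Zhang's proof of~\eqref{Zhang}. Setting $S:=\sum_p\sigma_k^{p\bar p}\eta_p$, the recurrence $\sigma_{k-1}(\lambda|q)=\sigma_{k-1}(\lambda|1q)+\lambda_1\sigma_{k-2}(\lambda|1q)$ yields $\sigma_{k-2}(\lambda|1q)=(\sigma_k^{q\bar q}-\sigma_{k-1}(\lambda|1q))/\lambda_1$ and hence
\begin{equation*}
-2\sum_{q>1}\sigma_k^{1\bar 1,q\bar q}\eta_1\eta_q=\frac{2\sigma_k^{1\bar 1}\eta_1^2}{\lambda_1}-\frac{2\eta_1 S}{\lambda_1}+\frac{2\eta_1}{\lambda_1}\sum_{q>1}\sigma_{k-1}(\lambda|1q)\eta_q,
\end{equation*}
exposing an explicit main positive contribution $+2\sigma_k^{1\bar 1}\eta_1^2/\lambda_1$. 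The cross term $-2\eta_1 S/\lambda_1$ is controlled by Cauchy--Schwarz with parameter $\varepsilon_0/2$, losing $(\varepsilon_0/2)\sigma_k^{1\bar 1}\eta_1^2/\lambda_1$ and producing a residue of order $S^2/(\varepsilon_0\lambda_1\sigma_k^{1\bar 1})$, which is absorbed into $KS^2/\sigma_k$ using $\lambda_1\sigma_k^{1\bar 1}/\sigma_k\to 1$ as $\lambda_1\to\infty$ (since $\sigma_k=\sigma_k(\lambda|1)+\lambda_1\sigma_k^{1\bar 1}$ with $\sigma_k(\lambda|1)$ bounded in terms of $\lambda_2,\ldots,\lambda_n$ and $A$). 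The remaining $\sigma_{k-1}(\lambda|1q)$-term is handled similarly by Cauchy--Schwarz, with parameters tuned so that its contribution to the $\eta_1^2$ slot is at most $(\varepsilon_0/2)\sigma_k^{1\bar 1}/\lambda_1$ and to each $\eta_q^2$ slot at most $(1-\varepsilon_0)\sigma_k^{q\bar q}/\lambda_1$; here the bound $\lambda_n>-A$ ensures uniform control of $|\sigma_{k-1}(\lambda|1q)|$. The residual pure sum $-\sum_{p,q>1,p\neq q}\sigma_{k-2}(\lambda|pq)\eta_p\eta_q$ is estimated via the concavity of $\sigma_{k-1}^{1/(k-1)}$ applied to the $(n-1)$-variable subsystem $\lambda|1\in\Gamma_{k-1}$, with any loss absorbed into the same $KS^2/\sigma_k$ term.

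The main obstacle is the precise bookkeeping in the third step: after all Cauchy--Schwarz losses are subtracted from the explicit $+2\sigma_k^{1\bar 1}\eta_1^2/\lambda_1$ contribution, exactly $(1-\varepsilon_0)\sigma_k^{1\bar 1}\eta_1^2/\lambda_1$ must remain on the right-hand side of~\eqref{key}, while simultaneously keeping the coefficient in front of $\sum_{i>1}\sigma_k^{i\bar i}\eta_i^2/\lambda_1$ at most $(1-\varepsilon_0)$. The hypothesis $\lambda_n>-A$ enters through uniform control of $|\sigma_{k-1}(\lambda|1q)|$, while the hypothesis that $\lambda_1$ is sufficiently large is what converts Zhang's $1/(\lambda_1+A+1)$-factors into the sharper $1/\lambda_1$-factors demanded by~\eqref{key}.
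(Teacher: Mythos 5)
Your first stage (diagonalize, write $\xi_p=\omega_{p\bar p j}$, split into real and imaginary parts, and reduce to a real inequality applied to $\eta=a$ and $\eta=b$) is correct and is exactly the reduction the paper leaves implicit; the overall plan of adapting Zhang's argument is also the paper's plan (the paper simply quotes Zhang's (3.27) and the derivation of his (3.73), does a dichotomy on the size of $\sigma_k(\lambda|1)$ relative to $-c_0\sigma_k$, and re-optimizes the constants). However, your sketch of the real inequality contains concrete errors and omits the genuinely hard step. First, the claim $\lambda_1\sigma_k^{1\bar 1}/\sigma_k\to 1$ as $\lambda_1\to\infty$ is false: for $\lambda=(t,\dots,t)$ one has $\lambda_1\sigma_{k-1}(\lambda|1)/\sigma_k=k/n$ for all $t$, and in the regime the paper must handle ($\sigma_k(\lambda|1)<-c_0\sigma_k$) the ratio exceeds $1+c_0$. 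The absorption of the Cauchy--Schwarz residue into $K|S|^2/\sigma_k$ is still fine, but the correct reason is the one-sided bound $\lambda_1\sigma_k^{1\bar1}\geq\frac{k}{n}\sigma_k$ (Proposition 2.1(8)), not a limit; and with your parameter $\delta=\varepsilon_0/2$ the resulting $K$ depends on $\varepsilon_0$, whereas the lemma asserts $K$ depends only on $k$ (this matters in Section 4, where $\varepsilon_0=3\beta$ is tied to $\phi$ and $K$ feeds back into the choice of $N$ and $\Lambda$). Second, the assertion that $\lambda_n>-A$ gives ``uniform control of $|\sigma_{k-1}(\lambda|1q)|$'' is wrong: semi-convexity is a lower bound on the eigenvalues only, and $\sigma_{k-1}(\lambda|1q)$ can be of order $\lambda_1^{k-1}$ (take $\lambda_2=\cdots=\lambda_n=\lambda_1$). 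The role of $\lambda_n>-A$ in Lu's and Zhang's proofs is quite different (it yields lower bounds such as $\sigma_{k-1}(\lambda|1q)\geq -C\lambda_1^{k-2}$ and comparability of $\sigma_{k-1}(\lambda|1)$ with $\sigma_{k-1}(\lambda|q)$ up to controlled errors), and your estimate as stated would fail.

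The decisive gap is your treatment of the residual sum $-\sum_{p\neq q,\,p,q>1}\sigma_{k-2}(\lambda|pq)\eta_p\eta_q$. Concavity of $\sigma_{k-1}^{1/(k-1)}$ on the subsystem $\lambda|1$ bounds this from below by $-\frac{k-2}{k-1}\bigl(\sum_{q>1}\sigma_{k-2}(\lambda|1q)\eta_q\bigr)^2/\sigma_{k-1}(\lambda|1)$, which is \emph{not} comparable to $|S|^2/\sigma_k$: converting it costs a factor involving $\sigma_k(\lambda|1)$, and controlling that factor is precisely the obstruction the paper discusses after \eqref{Lu} (it is why Lu's inequality needs $\Gamma_{k+1}$ and why Zhang's refinement requires the dichotomy on whether $\sigma_k(\lambda|1)\geq -c_0\sigma_k$ together with the good terms $\sum_{i>1}\sigma_k^{i\bar i}|\eta_i|^2/\lambda_1$ on the left of \eqref{key}). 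Your sentence ``with any loss absorbed into the same $K|S|^2/\sigma_k$ term'' asserts without proof exactly the content of the lemma. To repair the argument you would need to reproduce (or cite) Zhang's case analysis: handle $\sigma_k(\lambda|1)\geq-c_0\sigma_k$ directly from the $(k+2)^2/((k+1)(k+3))$-type lower bound, and in the opposite case run the completed-square argument with the auxiliary parameter $a$ and the large parameter $M$ as in the paper's Section 3.
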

\begin{remark}
The selection of auxiliary functions is crucial in dealing with third order terms. Dong \cite{D21} employed all eigenvalues of the solution to obtain additional good third order terms to control  bad terms. In our test function, we can choose the largest eigenvalue as the primary term with the help of the concavity inequality \eqref{key} and the techniques in \cite{Chu21} and \cite{Lu23}.
\end{remark}

The organization of the paper is as follows. In Section 2 we start with some properties for $k$-th elementary symmetric function and some basic calculations for complex Hessian equations on Hermitian manifolds. In Section 3 we prove the crucial concavity inequality. In Section 4 we give the proof of Theorem \ref{main}.

\section{preliminaries}
For $\lambda=(\lambda_1,\dots,\lambda_n)\in\mathbb{R}^n$, the $k$-th elementary symmetric function is defined by
\begin{equation*}
\sigma_k(\lambda)= \sum _{1 \le i_1 < i_2 <\cdots<i_k\leq
n}\lambda_{i_1}\lambda_{i_2}\cdots\lambda_{i_k}.
\end{equation*}
We also set $\sigma_0=1$ and $\sigma_k=0$ for $k>n$ or $k<0$. The G$\mathring{\text{a}}$rding's cone is defined by
\begin{equation*}
\Gamma_k  = \{ \lambda  \in \mathbb{R}^n :\sigma _i (\lambda ) >
0,~\forall~ 1 \le i \le k\}.
\end{equation*}
Denote $\sigma_k(\lambda|i)$ be the $k$-th elementary function with $\lambda_i=0$ and $\sigma_k(\lambda|ij)$ be the $k$-th elementary function with $\lambda_i=\lambda_j=0$. We list some properties of
$\sigma_k$ which will be used later.

\begin{proposition}\label{sigma}
Let $\lambda=(\lambda_1,\dots,\lambda_n)\in\mathbb{R}^n$ and $1\leq
k\leq n$, then we have

(1) $\Gamma_1\supset \Gamma_2\supset \cdot\cdot\cdot\supset
\Gamma_n$;

(2) $\sigma_{k-1}(\lambda|i)>0$ for $\lambda \in \Gamma_k$ and
$1\leq i\leq n$;

(3) $\sigma_k(\lambda)=\sigma_k(\lambda|i)
+\lambda_i\sigma_{k-1}(\lambda|i)$ for $1\leq i\leq n$;

(4)
$\sum_{i=1}^{n}\frac{\partial[\frac{\sigma_{k}}{\sigma_{l}}]^{\frac{1}{k-l}}}
{\partial \lambda_i}\geq [\frac{C^k_n}{C^l_n}]^{\frac{1}{k-l}}$ for
$\lambda \in \Gamma_{k}$ and $0\leq l<k$;

(5) $\Big[\frac{\sigma_k}{\sigma_l}\Big]^{\frac{1}{k-l}}$ are
concave in $\Gamma_k$ for $0\leq l<k$;

(6) If $\lambda \in
\Gamma_k$, $\lambda_1\geq \lambda_2\geq \cdot\cdot\cdot\geq \lambda_n$,
then $\sigma_{k-1}(\lambda|1)\leq \sigma_{k-1}(\lambda|2)\leq
\cdot\cdot\cdot\leq \sigma_{k-1}(\lambda|n)$;

(7)
$\sum_{i=1}^{n}\sigma_{k-1}(\lambda|i)=(n-k+1)\sigma_{k-1}(\lambda)$;

(8)
For $\lambda\in \Gamma_k$ and $\lambda_1\geq\cdots\geq\lambda_n$, $\lambda_1\sigma_{k-1}(\lambda|1)\geq\frac{k}{n}\sigma_k(\lambda)$.
\end{proposition}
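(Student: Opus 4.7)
My plan is to establish the eight items in an order that lets each be derived from the previous ones, taking as external input only Gårding's classical concavity theorem for hyperbolic polynomials. The combinatorial items come first. Property (1) is immediate from the definition of the cones. Property (3) follows by partitioning the $k$-subsets appearing in $\sigma_k(\lambda)$ according to whether they contain the index $i$: those containing $i$ sum to $\lambda_i\sigma_{k-1}(\lambda|i)$, the rest to $\sigma_k(\lambda|i)$. Property (7) is the double count that each $(k-1)$-subset $S\subset\{1,\ldots,n\}$ appears in $\sigma_{k-1}(\lambda|i)$ for precisely the $n-k+1$ indices $i\notin S$.

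For the positivity statement (2), I would use that $\Gamma_k$ is the connected component of $\{\sigma_k>0\}$ containing the positive orthant. At $\lambda_0=(1,\ldots,1)\in\Gamma_k$ one computes $\sigma_{k-1}(\lambda_0|i) = C_{n-1}^{k-1} > 0$, and by connectedness it suffices to rule out vanishing of $\sigma_{k-1}(\,\cdot\,|i)$ on $\Gamma_k$: if $\sigma_{k-1}(\lambda^*|i) = 0$ at some $\lambda^*\in\Gamma_k$, then $t\mapsto\sigma_k(\lambda^*+t e_i)$ is constant and equal to $\sigma_k(\lambda^*)>0$, contradicting the fact that $\sigma_k$ must vanish on $\partial\Gamma_k$ as $t$ decreases. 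Property (6) then follows from a double application of (3):
\[
\sigma_{k-1}(\lambda|i+1) - \sigma_{k-1}(\lambda|i) = (\lambda_i - \lambda_{i+1})\,\sigma_{k-2}(\lambda|i,i+1) \geq 0,
\]
using the ordering $\lambda_i\geq\lambda_{i+1}$ together with the positivity of $\sigma_{k-2}(\lambda|i,i+1)$ obtained by applying (2) one level deeper on the restricted vector.

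Property (5) is Gårding's theorem combined with its standard extension to ratios of elementary symmetric functions, which I would cite rather than reprove. For (4), set $\phi=(\sigma_k/\sigma_l)^{1/(k-l)}$; this is concave on $\Gamma_k$ by (5) and homogeneous of degree one. The concavity inequality relative to $\lambda_0=(1,\ldots,1)$ reads
\[
\phi(\lambda_0) \leq \phi(\lambda) + \sum_i \phi_{\lambda_i}(\lambda)\,(1-\lambda_i).
\]
Applying Euler's identity $\sum_i\lambda_i\phi_{\lambda_i}(\lambda) = \phi(\lambda)$ cancels $\phi(\lambda)$ against $-\sum_i\lambda_i\phi_{\lambda_i}(\lambda)$, leaving $\sum_i\phi_{\lambda_i}(\lambda) \geq \phi(\lambda_0) = [C_n^k/C_n^l]^{1/(k-l)}$.

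The most delicate item is (8). Using Euler's identity $k\sigma_k = \sum_i\lambda_i\sigma_{k-1}(\lambda|i)$ and pairing each $i\geq 2$ against $i=1$, I would rewrite
\[
n\lambda_1\sigma_{k-1}(\lambda|1) - k\sigma_k = \sum_{i\geq 2}(\lambda_1 - \lambda_i)\,\sigma_{k-1}(\lambda|1,i),
\]
where the last equality uses the double-deletion form of (3) applied to $\sigma_{k-1}$. Termwise positivity fails, since $\sigma_{k-1}(\lambda|1,i)$ can be negative on $\Gamma_k$; the proof must therefore be global rather than pointwise. I would minimize the left-hand side over $\overline{\Gamma_k}\cap\{\lambda_1\geq\lambda_j\text{ for all }j\}$: on $\partial\Gamma_k$ one has $\sigma_k=0$ together with $\lambda_1\sigma_{k-1}(\lambda|1)\geq 0$, and on the coincidence loci $\lambda_1=\lambda_j$ the problem reduces by induction on the dimension $n$. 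The main obstacle is precisely this last step in (8): unlike (2), (3), (6), (7), which reduce to pointwise algebraic positivity, the proof of (8) must combine the orientation of $\Gamma_k$ with the homogeneity of $\sigma_k$ to control the minimum over a noncompact region.
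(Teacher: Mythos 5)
The paper disposes of this proposition entirely by citation (items (1)--(3), (6), (7) to Lieberman/Huisken--Sinestrari, (4) to Gerhardt, (5) to Caffarelli--Nirenberg--Spruck, and (8) to Lemma 2 of \cite{D21}), so the only question is whether your arguments are complete. Items (1), (3), (7) are fine, (4) is exactly the standard concavity-plus-Euler argument, and (2), (6) are acceptable modulo the standard inputs you invoke silently (that $\Gamma_k$ as defined here is the connected component of $\{\sigma_k>0\}$ containing the positive orthant, that $\sigma_k$ vanishes on $\partial\Gamma_k$, and in (6) that the restricted vector $(\lambda|i)$ lies in $\Gamma_{k-1}(\mathbb{R}^{n-1})$, which indeed follows from (1) and repeated use of (2)). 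The genuine gap is (8). Your identity $n\lambda_1\sigma_{k-1}(\lambda|1)-k\sigma_k=\sum_{i\geq2}(\lambda_1-\lambda_i)\sigma_{k-1}(\lambda|1,i)$ is correct, and you rightly observe that termwise positivity fails; but the minimization scheme you then sketch is not a proof. The quantity is homogeneous of degree $k$ on a noncompact cone, so its infimum is either $0$ or $-\infty$ and nothing is gained without a normalization; you never analyze interior critical points, and the ``reduction by induction on $n$ at the coincidence loci'' is only asserted. You flag this step yourself as the main obstacle, so as it stands item (8) is unproved.

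In fact (8) has a short elementary proof from (2), (3), (7) alone, which is essentially the cited Lemma 2 of \cite{D21}. By (3), $\lambda_1\sigma_{k-1}(\lambda|1)=\sigma_k(\lambda)-\sigma_k(\lambda|1)$. If $\sigma_k(\lambda|1)\leq0$ (which includes $k=n$), then $\lambda_1\sigma_{k-1}(\lambda|1)\geq\sigma_k(\lambda)\geq\tfrac{k}{n}\sigma_k(\lambda)$ and we are done. If $\sigma_k(\lambda|1)>0$, then the restricted vector $\mu=(\lambda_2,\dots,\lambda_n)$ lies in $\Gamma_k(\mathbb{R}^{n-1})$: its $\sigma_j$ for $j\leq k-1$ are positive by (2) applied at level $j+1$, and $\sigma_k(\mu)>0$ by assumption. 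Applying (3) and (7) to $\mu$ gives the Euler identity $k\,\sigma_k(\mu)=\sum_{i\geq2}\lambda_i\,\sigma_{k-1}(\mu|i)$, and since each $\sigma_{k-1}(\mu|i)>0$ by (2) on $\mathbb{R}^{n-1}$ while $\lambda_i\leq\lambda_1$, we get $k\,\sigma_k(\lambda|1)\leq\lambda_1\sum_{i\geq2}\sigma_{k-1}(\lambda|1,i)=(n-k)\,\lambda_1\sigma_{k-1}(\lambda|1)$. Hence $\sigma_k(\lambda)=\sigma_k(\lambda|1)+\lambda_1\sigma_{k-1}(\lambda|1)\leq\bigl(\tfrac{n-k}{k}+1\bigr)\lambda_1\sigma_{k-1}(\lambda|1)=\tfrac{n}{k}\,\lambda_1\sigma_{k-1}(\lambda|1)$, which is (8). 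Replacing your minimization sketch by this two-case argument closes the gap.
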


\begin{proof}
All the properties are well known. For example, see Chapter XV in
\cite{Li96} or \cite{Hui99} for (1), (2), (3), (6) and (7); see Lemma 2.2.19 in \cite{Ger06} for (4); see
\cite{CNS85} and \cite{Li96} for (5); see Lemma 2 in \cite{D21} for (8).
\end{proof}

The generalized Newton-MacLaurin inequality is as follows.
\begin{proposition}\label{NM}
For $\lambda \in \Gamma_m$ and $m > l \geq 0$, $ r > s \geq 0$, $m
\geq r$, $l \geq s$, we have
\begin{align}
\Bigg[\frac{{\sigma _m (\lambda )}/{C_n^m }}{{\sigma _l (\lambda
)}/{C_n^l }}\Bigg]^{\frac{1}{m-l}} \le \Bigg[\frac{{\sigma _r
(\lambda )}/{C_n^r }}{{\sigma _s (\lambda )}/{C_n^s
}}\Bigg]^{\frac{1}{r-s}}. \notag
\end{align}
\end{proposition}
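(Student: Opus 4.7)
The plan is to reduce the inequality, after taking logarithms, to the concavity of the sequence $k \mapsto \log\bigl(\sigma_k(\lambda)/C_n^k\bigr)$ on $\{0,1,\ldots,m\}$, and then extract the claim as a standard chord-slope comparison for a concave finite sequence.

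First I would introduce the normalized quantities $P_k := \sigma_k(\lambda)/C_n^k$ (with $P_0 = 1$). Since $\lambda \in \Gamma_m$, the definition of $\Gamma_m$ gives $P_0, P_1, \ldots, P_m > 0$, so $\log P_k$ is well defined on $\{0, 1, \ldots, m\}$. I would then invoke the classical Newton inequality
\[
P_k^2 \;\ge\; P_{k-1}\, P_{k+1}, \qquad 1 \le k \le m-1,
\]
whose logarithm gives $2\log P_k \ge \log P_{k-1} + \log P_{k+1}$; that is, the finite sequence $\{\log P_k\}_{k=0}^{m}$ is concave.

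Next, I would rewrite the desired inequality in logarithmic form as
\[
\frac{\log P_m - \log P_l}{m-l} \;\le\; \frac{\log P_r - \log P_s}{r-s},
\]
and recognize it as chord-slope monotonicity for the concave sequence $\log P_k$. Under the hypotheses $s \le l$ and $r \le m$, the interval $[s,r]$ lies componentwise to the left of $[l,m]$, so a two-step sliding comparison
\[
\frac{\log P_r - \log P_s}{r-s} \;\ge\; \frac{\log P_m - \log P_s}{m-s} \;\ge\; \frac{\log P_m - \log P_l}{m-l}
\]
will finish the argument: each step fixes one endpoint and moves the other monotonically to the right, and each such move decreases the chord slope by concavity of $\log P_k$. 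Exponentiating then yields the stated bound.

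The only substantive ingredient is the Newton inequality $P_k^2 \ge P_{k-1}P_{k+1}$, which is the point where I expect to spend the most care. It is classical and in fact holds for every real $\lambda$ (one standard proof applies Rolle's theorem repeatedly to the real-rooted polynomial $\prod_i(t-\lambda_i)$, whose normalized elementary-symmetric coefficients interlace); the cone hypothesis $\lambda \in \Gamma_m$ enters only afterwards, when passing to logarithms requires positivity of the $P_k$ in play. Because Newton's inequality is applied only for $1 \le k \le m-1$, all indices $k-1, k, k+1$ remain within $\{0,\ldots,m\}$ where positivity is guaranteed, so no boundary difficulty arises and the chord-slope argument closes cleanly.
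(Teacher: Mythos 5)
Your argument is correct. Note, however, that the paper does not prove this proposition at all: its ``proof'' is a one-line citation to Spruck's survey \cite{S05}, so there is no internal argument to compare against. What you supply is in fact the standard proof that such references use: positivity of $P_k=\sigma_k(\lambda)/C_n^k$ for $0\le k\le m$ on $\Gamma_m$, Newton's inequality $P_k^2\ge P_{k-1}P_{k+1}$ (valid for arbitrary real $\lambda$, the cone hypothesis entering only to allow logarithms), hence concavity of the finite sequence $k\mapsto\log P_k$ on $\{0,\dots,m\}$, and finally the two-step chord-slope comparison
\begin{equation*}
\frac{\log P_r-\log P_s}{r-s}\;\ge\;\frac{\log P_m-\log P_s}{m-s}\;\ge\;\frac{\log P_m-\log P_l}{m-l},
\end{equation*}
which is exactly the statement after exponentiation, since $s\le l<m$ and $s<r\le m$ keep all indices in $\{0,\dots,m\}$. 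The two ingredients you leave as ``standard'' are indeed standard and quoted correctly: Newton's inequalities for real-rooted polynomials, and the fact that for a discretely concave sequence the chord slope $\frac{f(b)-f(a)}{b-a}$ is nonincreasing in each endpoint (it is the average of the nonincreasing increments $f(j+1)-f(j)$ over $[a,b)$). So your proposal is a complete, self-contained replacement for the paper's external citation, at the modest cost of taking Newton's inequality as known.
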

\begin{proof}
See \cite{S05}.
\end{proof}

To be more clear, we shall rewrite equation \eqref{K-eq} in a local coordinate chart. Let $\lambda\{a_{i\overline{j}}\}$ be the eigenvalues of a Hermitian matrix $\{a_{i\overline{j}}\}$. Define $\sigma_k(a_{i\overline{j}})=\sigma_k(\lambda\{a_{i\overline{j}}\})$. This definition can be naturally extended to complex manifolds. Let $\mathcal{A}^{1,1}(M)$ be the space of real smooth $(1,1)$-forms on Hermitian manifold $(M,\omega)$. For any $\chi\in\mathcal{A}^{1,1}(M)$, we write in a local coordinate chart $(z^1,\cdots,z^n)$
$$\chi=\sqrt{-1}\chi_{i\overline{j}}dz^i\wedge d\overline{z}^j,$$
and define
$$\sigma_k(\chi)=C_n^k\frac{\chi^k\wedge \omega^{n-k}}{\omega^n}.$$
In particular, in a local normal coordinate system $\omega_{i\overline{j}}=\delta_{ij}$, equation \eqref{K-eq} can be rewritten by
\begin{eqnarray}\label{K-eq-1}
\sigma_k(\chi)=\sigma_k(\chi'_{i\overline{j}}+u_{i\overline{j}}+a_iu_{\overline{j}}+a_{\overline{j}}u_i)=\psi(z, Du, u).
\end{eqnarray}

\begin{lemma}
If $F(A)= f(\lambda_1, \ldots, \lambda_n)$ is a symmetric function of the eigenvalues of a Hermitian matrix $A=\{a_{i \ov j}\}$, then at a diagonal matrix $A$ with distinct eigenvalues, we have
\begin{align}
\label{symmetric func 1th deriv} F^{i\ov j} =&\ \delta_{ij} f_i,\\
\label{symmetric func 2th deriv} F^{i\ov j, r\ov s} w_{i\ov j k} w_{r\ov s \ov k} =&\ \sum_{i,j} f_{ij} w_{i\ov i k} w_{j \ov j \ov k} + \sum_{p\neq q}\frac{f_p - f_q}{\lambda_p-\lambda_q} | w_{p\ov q k}|^2,
\end{align}
where $F^{i\ov j}=\frac{\partial F}{\partial a_{i \ov j}}$,
$F^{i\ov j,r\ov s}=\frac{\partial^2 F}{\partial a_{i \ov j}\partial a_{r \ov s}}$ and $w_{i\ov jk}$ is an arbitrary tensor.
\end{lemma}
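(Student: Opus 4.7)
The plan is to establish both formulas simultaneously by differentiating $F$ along a holomorphic family of Hermitian perturbations of $A$, exploiting the fact that because the eigenvalues of $A$ are simple, Rayleigh--Schr\"odinger perturbation theory provides smooth expansions of the $\lambda_i$ in a neighborhood of $A$. Concretely, for an arbitrary complex matrix $B=(B_{ij})$ we introduce the one-parameter Hermitian family
\[
A(z)=A+zB+\bar z B^{*},\qquad z\in\mathbb{C},
\]
and compute $\partial_z F(A(z))|_{z=0}$ and $\partial_z\partial_{\bar z}F(A(z))|_{z=0}$ in two ways: directly via the chain rule in the matrix entries of $A$, and indirectly through the dependence of the eigenvalues $\lambda_i(z,\bar z)$. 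Matching the two expressions will pin down the non-vanishing components of $F^{i\ov j}$ and $F^{i\ov j,r\ov s}$.

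For the first-order derivative, standard first-order perturbation theory gives $\partial_z\lambda_i|_{0}=B_{ii}$, so the chain rule for $f$ yields $\partial_z F|_0=\sum_i f_i B_{ii}$. On the other hand, the chain rule in the entries of $A$ gives $\partial_z F|_0=F^{i\ov j}B_{ij}$, and requiring equality for every $B$ forces $F^{i\ov j}=\delta_{ij}f_i$, which is the first formula. For the second-order derivative, the key input is the Rayleigh--Schr\"odinger expansion
\[
\partial_z\partial_{\bar z}\lambda_i\big|_0 \;=\; \sum_{j\neq i}\frac{|B_{ij}|^2+|B_{ji}|^2}{\lambda_i-\lambda_j},
\]
which is valid precisely because the $\lambda_i$ are simple. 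Combining this with the chain rule for $f$ produces
\[
\partial_z\partial_{\bar z}F\big|_0 \;=\; \sum_{i,j}f_{ij}\,B_{ii}\,\ov{B_{jj}}+\sum_{i\neq j}\frac{f_i\bigl(|B_{ij}|^2+|B_{ji}|^2\bigr)}{\lambda_i-\lambda_j},
\]
and after swapping $i\leftrightarrow j$ in the $|B_{ji}|^2$ piece the eigenvalue sum collapses to $\sum_{i\neq j}\frac{f_i-f_j}{\lambda_i-\lambda_j}|B_{ij}|^2$.

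On the other hand, since $A(z)$ is linear in $(z,\bar z)$ with $\partial_z A=B$ and $\partial_{\bar z}A=B^{*}$, a direct chain rule in the matrix entries gives $\partial_z\partial_{\bar z}F|_0=F^{i\ov j,r\ov s}B_{ij}(B^{*})_{rs}$. Equating this with the eigenvalue-side computation and letting $B$ vary extracts the only non-vanishing components of $F^{i\ov j,r\ov s}$, namely $F^{p\ov p,q\ov q}=f_{pq}$ and $F^{p\ov q,q\ov p}=(f_p-f_q)/(\lambda_p-\lambda_q)$ for $p\neq q$. For each fixed $k$ we then specialize to $B_{ij}=w_{i\ov j k}$; using that $w$ is the covariant derivative of a real $(1,1)$-tensor, the reality relation $\ov{w_{s\ov r k}}=w_{r\ov s\ov k}$ yields $(B^{*})_{rs}=w_{r\ov s\ov k}$, so the contraction $F^{i\ov j,r\ov s}w_{i\ov j k}w_{r\ov s\ov k}$ reproduces the right-hand side of the second formula with $|w_{p\ov q k}|^2=B_{pq}(B^{*})_{qp}$.

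The main technical step is the second-order Rayleigh--Schr\"odinger expansion for the simple eigenvalues of the Hermitian family $A(z)$; everything else is bookkeeping via the chain rule together with the symmetrization trick that turns the coefficient $f_i$ into the divided-difference factor $(f_i-f_j)/(\lambda_i-\lambda_j)$. A secondary subtlety is correctly tracking the complex-conjugation relation between $w_{i\ov j k}$ and $w_{r\ov s\ov k}$ when specializing the identity from arbitrary Hermitian perturbations $B$ to contractions with the tensor $w$.
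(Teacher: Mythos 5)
Your proof is correct. There is nothing in the paper to compare it against in detail: the paper does not prove this lemma but simply cites \cite{Ba84,D21}, and what you have written is the standard perturbation-theoretic derivation underlying those references. Your first- and second-order formulas for the simple eigenvalues of the Hermitian family $A(z)=A+zB+\bar z B^{*}$ are the classical ones; in a complete write-up the mixed derivative $\partial_z\partial_{\bar z}\lambda_i\big|_0=\sum_{j\neq i}\bigl(|B_{ij}|^2+|B_{ji}|^2\bigr)/(\lambda_i-\lambda_j)$ should be derived (a short computation obtained by differentiating $A(z)v_i(z)=\lambda_i(z)v_i(z)$ twice) rather than quoted, since simplicity of the eigenvalues is exactly what makes it valid. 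The rest of your bookkeeping is sound: the term $F^{i\ov j}\partial_z\partial_{\bar z}a_{i\ov j}$ drops out because the family is affine in $(z,\bar z)$, the index swap turns $\sum_{i\neq j}f_i\bigl(|B_{ij}|^2+|B_{ji}|^2\bigr)/(\lambda_i-\lambda_j)$ into the divided-difference sum, and sesquilinear polarization in $B$ does determine all components, yielding $F^{p\ov p,q\ov q}=f_{pq}$ and $F^{p\ov q,q\ov p}=(f_p-f_q)/(\lambda_p-\lambda_q)$ for $p\neq q$, which contract to the stated identity. One point you rightly flag and that deserves emphasis: although the lemma says $w_{i\ov jk}$ is ``an arbitrary tensor'', the right-hand side with $|w_{p\ov qk}|^2$ only matches the contraction $F^{i\ov j,r\ov s}w_{i\ov jk}w_{r\ov s\ov k}$ under the reality relation $w_{r\ov s\ov k}=\overline{w_{s\ov rk}}$, which holds in the paper's application $w_{i\ov jk}=D_k\chi_{i\ov j}$; your specialization $B_{ij}=w_{i\ov jk}$, $(B^{*})_{rs}=w_{r\ov s\ov k}$ is therefore the correct reading of the statement.
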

\begin{proof}
See \cite{Ba84,D21}.
\end{proof}

In local complex coordinates $\left(z_{1}, \cdots, z_{n}\right)$, the subscripts of a function $u$ always denote the covariant derivatives of $u$ with respect to $\omega$ in the directions of the local frame $\left(\partial / \partial z^{1}, \cdots, \partial / \partial z^{n}\right)$. Namely,
\begin{equation}\nonumber
u_{i}=D_{i}u=D_{\partial / \partial z^{i}} u, \; u_{i \overline{j}}=D_{\partial / \partial \overline{z}^{j}} D_{\partial / \partial z^{i}} u, \; u_{i \overline{j} \ell}=D_{\partial / \partial z^{\ell}} D_{\partial / \partial \overline{z}^{j}} D_{\partial / \partial z^{i}} u.
\end{equation}
Next, we list some well-known results on Hermitian manifolds.
\begin{lemma}
\begin{equation}\label{order}
\begin{aligned}
u_{pj}= &u_{jp}+T^k_{pj}u_k,  \;\;\\
u_{i \overline{j} \ell}= &\  u_{i \ell \overline{j}}-u_{p} R_{\ell \overline{j} i}{}^p, \;\;\\
u_{p \overline{j} \overline{m}}= &\  u_{p \overline{m} \overline{j}}-\overline{T_{m j}^{q}} u_{p \overline{q}},\;\;\\
u_{i \overline{q} \ell}= &\  u_{\ell \ov q i}-T_{\ell i}^{p} u_{p \overline{q}},
\end{aligned}
\end{equation}
\end{lemma}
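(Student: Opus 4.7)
The plan is to derive all four commutation formulas directly from the structural properties of the Chern connection $D$ on $(M,\omega)$: its torsion vanishes on mixed $(1,0)$--$(0,1)$ pairs, so the only nonzero components are $T^k_{ij}$ and $\overline{T^k_{ij}}=T^{\bar k}_{\bar i\bar j}$, and its curvature has only mixed type components such as $R_{i\bar j k}{}^{\ell}$, vanishing on pure $(2,0)$ or $(0,2)$ pairs. The general commutator rule I will apply to an arbitrary tensor $S$ is $[D_X,D_Y]S=R(X,Y)S-D_{T(X,Y)}S$, evaluated on coordinate vector fields $\partial/\partial z^i$ and $\partial/\partial\bar z^j$.

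For identity (1), I would expand in local coordinates using Christoffel symbols: $u_{pj}=\partial_j\partial_p u-\Gamma^k_{jp}u_k$ and $u_{jp}=\partial_p\partial_j u-\Gamma^k_{pj}u_k$. Subtracting and using the symmetry of partials, the difference collapses to $(\Gamma^k_{pj}-\Gamma^k_{jp})u_k=T^k_{pj}u_k$, which is (1). For identity (2), I would apply the commutator rule with $X=\partial/\partial z^\ell$ and $Y=\partial/\partial\bar z^j$ to the $1$-form $u_i\,dz^i$; the mixed torsion $T(\partial/\partial z^\ell,\partial/\partial\bar z^j)$ vanishes for the Chern connection, so only the curvature action survives, which on a covector produces $-u_p R_{\ell\bar j i}{}^p$. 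Translating this into the subscript notation gives $u_{i\bar j\ell}-u_{i\ell\bar j}=-u_p R_{\ell\bar j i}{}^p$. Identity (3) follows by the same commutator argument applied to the $1$-form $u_p\,dz^p$ with $X=\partial/\partial\bar z^m$ and $Y=\partial/\partial\bar z^j$: this time the curvature term drops out since the Chern curvature has no pure $(0,2)$ part, while $T(\partial/\partial\bar z^m,\partial/\partial\bar z^j)=\overline{T^q_{mj}}\,\partial/\partial\bar z^q$ contributes the torsion term $-\overline{T^q_{mj}}\,u_{p\bar q}$.

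Identity (4) is the only one that genuinely mixes the previous three. My plan is to apply (2) twice to move the $\bar z^q$-derivative to the outside on both sides, obtaining $u_{i\bar q\ell}=u_{i\ell\bar q}-u_p R_{\ell\bar q i}{}^p$ and $u_{\ell\bar q i}=u_{\ell i\bar q}-u_p R_{i\bar q\ell}{}^p$. Then I would invoke (1) on the inner Hessian $u_{i\ell}=u_{\ell i}+T^k_{i\ell}u_k$ and differentiate with $D_{\bar q}$ to obtain $u_{i\ell\bar q}=u_{\ell i\bar q}+T^k_{i\ell}u_{k\bar q}+(D_{\bar q}T^k_{i\ell})u_k$. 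Subtracting, identity (4) reduces (after using $T^p_{i\ell}=-T^p_{\ell i}$) to the algebraic claim
\begin{equation*}
(D_{\bar q}T^p_{i\ell})\,u_p=u_p\bigl(R_{\ell\bar q i}{}^p-R_{i\bar q\ell}{}^p\bigr),
\end{equation*}
which is the first Bianchi identity for the Chern connection. The main obstacle is not substantive but is a matter of bookkeeping: carefully tracking sign conventions so that the action of $R(X,Y)$ on covectors matches the index placement used for $R_{\ell\bar j i}{}^p$ in the paper, and then matching the Bianchi identity that closes step (4) to the sign choice adopted in step (2).
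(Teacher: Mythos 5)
Your plan is correct, but note that the paper does not actually prove this lemma at all: its ``proof'' is a citation to Tosatti--Weinkove \cite{TW19}, where these commutation formulas are derived by exactly the kind of local computation you outline. So what you are supplying is the omitted standard argument rather than an alternative to one given in the paper. Your reductions check out in the paper's conventions: (1) is the Christoffel-symbol computation with $T^k_{pj}=\Gamma^k_{pj}-\Gamma^k_{jp}$; (2) and (3) follow from the commutator rule because the Chern torsion has no mixed $(1,0)$--$(0,1)$ component and the Chern curvature has no $(0,2)$ part; and your reduction of (4) to $(D_{\bar q}T^p_{i\ell})u_p=u_p\bigl(R_{\ell\bar q i}{}^p-R_{i\bar q\ell}{}^p\bigr)$ is exactly right. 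One simplification worth making: you do not need to invoke an abstract first Bianchi identity to close step (4). For the Chern connection the mixed Christoffel symbols vanish, so $D_{\bar q}T^p_{i\ell}=\partial_{\bar q}T^p_{i\ell}=\partial_{\bar q}\bigl(\Gamma^p_{i\ell}-\Gamma^p_{\ell i}\bigr)$, and with the convention $R_{i\bar q\ell}{}^p=-\partial_{\bar q}\Gamma^p_{i\ell}$ the required identity is immediate; this also settles the sign-bookkeeping concern you raise, since the same convention makes (2) come out as $u_{i\bar j\ell}-u_{i\ell\bar j}=\partial_{\bar j}\Gamma^p_{\ell i}\,u_p=-u_pR_{\ell\bar j i}{}^p$, matching the statement.
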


\begin{equation}\label{4d}
u_{i \overline{j} \ell \overline{m}} =  u_{\ell \overline{m} i \overline{j}}
+u_{p \overline{j}} R_{\ell \overline{m} i}{}^{p}
 - u_{p \overline{m}} R_{i \overline{j} \ell}{}^{p}
- T_{\ell i}^{p} u_{p \overline{m} \overline{j}}
-\overline{T_{m j}^{q}} u_{\ell \overline{q} i}
 -T_{i \ell}^{p} \overline{T_{m j}^{q}} u_{p \overline{q}}.
\end{equation}
\begin{proof}
See \cite{TW19}.
\end{proof}

Denote
\begin{equation}
\sigma_{k}^{p \overline{q}}=\frac{\partial \sigma_{k}}{\partial \chi_{p \overline{q}}}, \quad \sigma_{k}^{p \overline{q}, r \overline{s}}=\frac{\partial^{2} \sigma_{k}}{\partial \chi_{p \overline{q}} \partial \chi_{r \overline{s}}}, \quad
\mathcal{F}=\sum_{p} \sigma_{k}^{p \overline{p}}.
\end{equation}
We also use the following notations as in \cite{PPZ19},
\begin{equation}
\begin{aligned}
|D D u|_{\sigma \omega}^{2}=\sigma_{k}^{p \overline{q}} \omega^{m \overline{\ell}} u_{mp} u_{\ov \ell \ov q},\;\;
|D \ov{D} u|_{\sigma \omega}^{2}=\sigma_{k}^{p \overline{q}} \omega^{m \overline{\ell}} u_{p\ov\ell} u_{m\ov q},
\end{aligned}
\end{equation}
and
\begin{equation}
|\eta|_{\sigma}^{2}=\sigma_{k}^{p \overline{q}} \eta_{p} \eta_{\overline{q}},
\end{equation}
for any 1-form $\eta$, where $\{\omega^{m\overline{l}}\}$ is the inverse of the metric $\omega$.


The following basic calculations are carried out at a point $z$ on the manifold $M$, and we use coordinates such that at this point $\omega=\sqrt{-1} \sum_{k,\ell} \delta_{k \ell} dz^{k} \wedge d \overline{z}^{\ell}$ and $\{\chi_{i \ov{j}}\}$ is diagonal. Note that $\{\sigma_{k}^{i \overline{j}}\}$ is diagonal when $\{\chi_{i \ov{j}}\}$ is diagonal.
Combining with \eqref{order} and \eqref{4d}, we derive the following two inequalities which will be used later.
\begin{lemma}
Let $\lambda=(\lambda_1,\cdots,\lambda_n)$ be the eigenvalues of $\{\chi_{i \ov{j}}\}$ with the ordering $\lambda_1\geq\lambda_2\geq\cdots\geq\lambda_n$, then
\begin{equation}
\begin{aligned}\label{equ}
& \sigma_{k}^{p \overline{q}} D_{\overline{q}} D_{p} \chi_{i \overline{j}}\\
 \geq &\ -\sigma_{k}^{p \overline{q}, r \overline{s}} D_{\ov{j}} \chi_{r \overline{s}} D_{i} \chi_{p \overline{q}}+\sum_\ell \psi_{v_\ell} D_{\ell}\chi_{i \ov{j}} + \sum_\ell \psi_{\ov{v}_\ell}D_{\ov{\ell}} \chi_{i \ov{j}}\\
&\ -\sigma_{k}^{p \ov{q}} (T_{p i}^{a} u_{a \overline{q} \overline{j}} +\overline{T_{q j}^{a}} u_{p \overline{a} i})-C(1+|DDu|^{2}+|D \ov{D}u|^{2}+\lambda_{1} \mathcal{F}+\lambda_{1})\\
&\ +\sigma_k^{p\overline{q}}(a_iu_{\overline{j}p\overline{q}}+a_{\overline{j}}u_{ip\overline{q}}
-a_pu_{\overline{q}i\overline{j}}-a_{\overline{q}}u_{pi\overline{j}})\\
&\ +\sigma_k^{p\overline{q}}(a_{ip}u_{\overline{j}\overline{q}}+a_{\overline{j}\overline{q}}u_{ip}-a_{pi}u_{\overline{q}\overline{j}}
-a_{\overline{q}\overline{j}}u_{pi}),
\end{aligned}
\end{equation}
\begin{equation}
\begin{aligned}\label{derivative of Du}
\sigma_{k}^{p \overline{q}}|D u|_{p \overline{q}}^{2}
\geq 2 \operatorname{Re}\left\{\sum_{p, m}\left(u_{\overline{p}}u_{mp}+u_p u_{m\overline{p}}\right) \psi_{v_{m}}\right\}
+\frac{1}{2}|D D u|_{\sigma \omega}^{2}+\frac{1}{2}|D \ov{D} u|_{\sigma \omega}^{2}-C-C\mathcal{F}.
\end{aligned}
\end{equation}
\end{lemma}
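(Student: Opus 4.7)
\textbf{Plan for \eqref{equ}.} My plan is to differentiate the equation $\sigma_k(\chi)=\psi(z,Du,u)$ twice, once in the direction $\partial/\partial z^i$ and once in $\partial/\partial\bar z^j$, and then swap the resulting mixed second derivatives of $\chi$ by the Hermitian commutation identities. A single differentiation gives $\sigma_k^{p\bar q}D_i\chi_{p\bar q}=D_i\psi=\psi_{z_i}+\psi_{v_m}u_{mi}+\psi_{\bar v_m}u_{\bar m i}+\psi_uu_i$, and differentiating again yields $\sigma_k^{p\bar q,r\bar s}D_{\bar j}\chi_{r\bar s}D_i\chi_{p\bar q}+\sigma_k^{p\bar q}D_{\bar j}D_i\chi_{p\bar q}=D_{\bar j}D_i\psi$; rearranging puts the concavity piece $-\sigma_k^{p\bar q,r\bar s}D_{\bar j}\chi_{r\bar s}D_i\chi_{p\bar q}$ on the correct side. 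On the right, the chain-rule expansion of $D_{\bar j}D_i\psi$ produces $\psi_{vv}$, $\psi_{v\bar v}$, $\psi_{vu}$, etc.\ paired with products from $\{u_{mi},u_{\bar m i},u_{m\bar j},u_{\bar m\bar j},u_i,u_{\bar j}\}$; by the smoothness of $\psi$ and Cauchy--Schwarz these are bounded by $C(1+|DDu|^2+|D\overline{D}u|^2)$ and absorbed into the buffer. The genuinely third order survivors are $\psi_{v_\ell}u_{\ell i\bar j}$ and $\psi_{\bar v_\ell}u_{\bar\ell i\bar j}$, which I rewrite as $\psi_{v_\ell}D_\ell\chi_{i\bar j}+\psi_{\bar v_\ell}D_{\bar\ell}\chi_{i\bar j}$ by expanding $D_\ell\chi_{i\bar j}=D_\ell\chi'_{i\bar j}+u_{i\bar j\ell}+(a_{i\ell}u_{\bar j}+a_i u_{\bar j\ell}+a_{\bar j\ell}u_i+a_{\bar j}u_{i\ell})$ and using the commutation $u_{i\bar j\ell}=u_{\ell i\bar j}+(\text{curvature/torsion})$ from \eqref{order}.

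\textbf{Swapping $D_{\bar j}D_i\chi_{p\bar q}$ and $D_{\bar q}D_p\chi_{i\bar j}$.} For the pure Hessian part $\chi=\chi'+\sqrt{-1}\partial\bar\partial u+(a\text{-pieces})$, the commutator $[D_{\bar j}D_i,D_{\bar q}D_p]u$ is read off from \eqref{4d}: it produces a Riemann tensor contracted with $\chi_{p\bar q}$ (controlled by $C\lambda_1\mathcal{F}$) together with the two unremovable torsion third-derivative pieces $-\sigma_k^{p\bar q}(T_{pi}^au_{a\bar q\bar j}+\overline{T_{qj}^a}u_{p\bar ai})$ appearing explicitly in \eqref{equ}. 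The $a$-pieces of $\chi$ contribute additional third-derivative cross terms: from $D_{\bar q}D_p(a_iu_{\bar j}+a_{\bar j}u_i)$ in $\chi_{i\bar j}$ I extract $\sigma_k^{p\bar q}(a_iu_{\bar jp\bar q}+a_{\bar j}u_{ip\bar q})$, and symmetrically from $D_{\bar j}D_i(a_pu_{\bar q}+a_{\bar q}u_p)$ in $\chi_{p\bar q}$ I extract $-\sigma_k^{p\bar q}(a_pu_{\bar qi\bar j}+a_{\bar q}u_{pi\bar j})$; these four terms form the first $a$-bracket. The $\partial a\cdot u_{**}$ byproducts of the same two differentiations give the second $a$-bracket. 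All the remaining $a\cdot a\cdot u_{**}$, $\partial a\cdot u$, and $D\chi'$ pieces are $O(1+|DDu|^2+|D\overline{D}u|^2+\lambda_1\mathcal{F}+\lambda_1)$ and absorbed into the buffer.

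\textbf{Plan for \eqref{derivative of Du}.} I compute $\sigma_k^{p\bar q}D_{\bar q}D_p|Du|^2$ at a normal coordinate point, where $|Du|^2=\sum_m u_mu_{\bar m}$. The Leibniz rule produces two quadratic groups, $\sigma_k^{p\bar q}u_{mp}u_{\bar m\bar q}=|DDu|_{\sigma\omega}^2$ and $\sigma_k^{p\bar q}u_{m\bar q}u_{\bar mp}=|D\overline{D}u|_{\sigma\omega}^2$, together with the two linear groups $u_{\bar m}\,\sigma_k^{p\bar q}u_{mp\bar q}$ and $u_m\,\sigma_k^{p\bar q}u_{\bar mp\bar q}$. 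For the linear groups I invoke the once-differentiated equation: commuting $u_{mp\bar q}=u_{p\bar qm}+(\text{torsion/curvature})$ from \eqref{order} and using $D_m\chi_{p\bar q}=u_{p\bar qm}+(\text{first-order $a$-terms})$ gives $\sigma_k^{p\bar q}u_{mp\bar q}=\psi_{z_m}+\psi_{v_\ell}u_{\ell m}+\psi_{\bar v_\ell}u_{\bar\ell m}+\psi_uu_m+O(1+\mathcal{F})$, and symmetrically for $\sigma_k^{p\bar q}u_{\bar mp\bar q}$. Pairing against $u_{\bar m}$ and $u_m$ and summing produces the prescribed $2\operatorname{Re}\{\sum_{p,m}(u_{\bar p}u_{mp}+u_pu_{m\bar p})\psi_{v_m}\}$ term. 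The torsion byproducts that survive are of the form (bounded)$\cdot u_{**}\cdot u_{*}$; the inequality $|u_* \cdot T u_{**}|\leq\tfrac12|u_{**}|^2+C|u_*|^2$, summed with the weight $\sigma_k^{p\bar p}$, absorbs them against the two quadratic pieces at the cost of dropping the leading coefficient from the bare $1$ of Leibniz to the $\tfrac12$ in the conclusion, while the remainder contributes $-C-C\mathcal{F}$.

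\textbf{Main obstacle.} The delicate step throughout is the Hermitian (non-K\"ahler) bookkeeping: every commutation of a mixed holomorphic/antiholomorphic third derivative generates a torsion term $T\ast u_{***}$ of the same order as the quantity being estimated. In \eqref{equ} these cannot be absorbed and must be carried through as the explicit term $-\sigma_k^{p\bar q}(T_{pi}^au_{a\bar q\bar j}+\overline{T_{qj}^a}u_{p\bar ai})$, whereas in \eqref{derivative of Du} they can be paired against $|DDu|_{\sigma\omega}^2$ and $|D\overline{D}u|_{\sigma\omega}^2$ only at the cost of halving the leading coefficient. Equally delicate is disentangling the $a$-dependent third-derivative terms from the $u$-dependent ones: the defining relation $\chi_{i\bar j}=\chi'_{i\bar j}+u_{i\bar j}+a_iu_{\bar j}+a_{\bar j}u_i$ mixes them in every differentiation, and the exact shape of the two $a$-brackets in \eqref{equ} only emerges after meticulously collecting all the cross terms generated by $D_{\bar j}D_i$ and $D_{\bar q}D_p$ acting on the $a$-pieces of $\chi$.
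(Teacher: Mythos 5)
Your proposal is correct and follows essentially the same route as the paper, which gives no independent argument here but simply cites the computations behind (19) and (23) in Dong \cite{D21}: differentiate $\sigma_k(\chi)=\psi(z,Du,u)$ twice, commute fourth derivatives via \eqref{order}--\eqref{4d} while keeping the two unabsorbable torsion terms and the two $a$-brackets explicit, and for \eqref{derivative of Du} expand $\sigma_k^{p\bar q}|Du|^2_{p\bar q}$ by Leibniz, insert the once-differentiated equation, and absorb the torsion/$a$ byproducts into the $\sigma\omega$-weighted quadratic terms at the cost of the factor $\tfrac12$. Your bookkeeping of the signs in both $a$-brackets and of the surviving $\psi_{v}$-terms matches the stated inequalities, so the plan reconstructs the cited proof faithfully.
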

\begin{proof}
See the proof of (19) and (23) in Section 2 of \cite{D21}.
\end{proof}
\section{A concavity inequality}
In this section, we provide a proof of the concavity inequality in the complex case. Since it is similar to the real case in \cite{Z24}, we mainly highlight the differences.

\emph{Proof of Lemma \ref{key lemma}.}
By (3.27) in \cite{Z24}, we have for $K\geq(k+1)^2$,
\begin{equation}\label{iq c0}
-\sum_{p\neq q}\frac{\sigma_k^{p\overline{p},q\overline{q}}\omega_{p\overline{p}j}\omega_{q\overline{q}\overline{j}}}{\sigma_k}+K\frac{|D_j\sigma_k|^2}{\sigma_k^2}
\geq\frac{(k+2)^2}{(k+1)(k+3)}\frac{|\omega_{1\overline{1}j}|^2}{\lambda_1^2}-\frac{C}{\lambda_1^{\frac{1}{k-1}}}\sum_{i>1}
\frac{\s_k^{i\overline{i}}|\omega_{i\overline{i}j}|^2}{\lambda_1\s_k},
\end{equation}
where $C$ is a positive constant depending on $n,k,\sigma_k,A$.
Recall that
\begin{equation}\label{sk}
    \s_k=\lambda_1\sigma_k^{1\overline{1}}+\s_k(\lambda|1).
\end{equation}
We note that if
\begin{equation}
    \s_k(\lambda|1)\geqslant-\frac{\s_k}{2(k+2)^2-1},
\end{equation}
then \eqref{iq c0} implies the inequality \eqref{key} by choosing $\lambda_1\geq\left(\frac{C}{1-\varepsilon_0}\right)^{k-1}$.

Denote $c_0=\frac{1}{2(k+2)^2-1}$ and next we only need to consider the case
\begin{align}\label{sk1 c0}
    \s_k(\lambda|1)<-c_0\s_k.
\end{align}
Under this assumption, refer to the deduction of (3.73) in \cite{Z24}, we derive
\begin{eqnarray}
  \nonumber&&-\sum_{p\neq q}\frac{\sigma_k^{p\overline{p},q\overline{q}}\omega_{p\overline{p}j}\omega_{q\overline{q}\overline{j}}}{\sigma_k}+K\frac{|D_j\sigma_k|^2}{\sigma_k^2}
  +(1-\varepsilon_0)\sum_{i>1}\frac{\sigma_k^{i\overline{i}}|\omega_{i\overline{i}j}|^2}{\lambda_1\sigma_k}\\
 \nonumber &\geq& (k+1-l)(1-\varepsilon_0-\frac{C_1}{M})(1-\frac{C_1}{M})(1+a)^2\frac{\lambda_1\sigma_k^{1\overline{1}}-\sigma_k}{\lambda_1^2\sigma_k}|\omega_{1\overline{1}j}|^2
  +\frac{(k+2)^2}{(k+1)(k+3)}\frac{|\omega_{1\overline{1}j}|^2}{\lambda_1^2}\\
  \nonumber&&+\frac{2a^2|\omega_{1\overline{1}j}|^2}{\lambda_1}\left(1-\frac{C_2}{M}\right)\frac{\lambda_1\sigma_k^{1\overline{1}}
  -\sigma_k}{\lambda_1\sigma_k}-C_3(a^2+(1-\varepsilon_0)(1+a)^2)\frac{\sigma_k^{1\overline{1}}|\omega_{1\overline{1}j}|^2}{M\lambda_1}\\
 \nonumber &\geq&\frac{2(a^2+(1-\varepsilon_0)(1+a)^2)|\omega_{1\overline{1}j}|^2}{\lambda_1}\left(1-\frac{C_2}{(1-\varepsilon_0)M}\right)
  \frac{\lambda_1\sigma_k^{1\overline{1}}-\sigma_k}{\lambda_1\sigma_k}\\
  \nonumber&&+\frac{(k+2)^2}{(k+1)(k+3)}\frac{|\omega_{1\overline{1}j}|^2}{\lambda_1^2}
  -C_3(a^2+(1-\varepsilon_0)(1+a)^2)\frac{\sigma_k^{1\overline{1}}|\omega_{1\overline{1}j}|^2}{M\lambda_1}\\
 \nonumber &\geq&(2-\varepsilon_0)\frac{(a^2+(1-\varepsilon_0)(1+a)^2)|\omega_{1\overline{1}j}|^2}{\lambda_1}
  \frac{\lambda_1\sigma_k^{1\overline{1}}-\sigma_k}{\lambda_1\sigma_k}
  +\frac{(k+2)^2}{(k+1)(k+3)}\frac{|\omega_{1\overline{1}j}|^2}{\lambda_1^2}\\
  \nonumber&&+\frac{(a^2+(1-\varepsilon_0)(1+a)^2)|\omega_{1\overline{1}j}|^2}{\lambda_1}\left[
  \frac{\varepsilon_0}{2}\frac{\lambda_1\sigma_k^{1\overline{1}}-\sigma_k}{\lambda_1\sigma_k}-\frac{C'_3\sigma_k^{1\overline{1}}}{\sigma_kM}\right],
\end{eqnarray}
by assuming $\lambda_1>M^k$ and $M$ large enough such that $1-\frac{C_2}{(1-\varepsilon_0)M}\geq1-\frac{\varepsilon_0}{4}$. Here $C_1,C_2,C_3,C'_3$ are positive constants depending on $n,k,\sigma_k,A$.

Since under the assumption $\s_k(\lambda|1)<-c_0\s_k$, \eqref{sk} implies that
\begin{align*}
  \frac{\s_k^{1\overline{1}}}{\s_k}= \frac{1}{\lambda_1}-\frac{\s_k(\lambda|1)}{\lambda_1\s_k}> \frac{c_0+1}{\lambda_1}.
\end{align*}
Then by choosing $M\geq\frac{2C'_3(c_0+1)}{c_0\varepsilon_0}$, we get
\begin{eqnarray*}
 \frac{\varepsilon_0}{2}\frac{\lambda_1\sigma_k^{1\overline{1}}-\sigma_k}{\lambda_1\sigma_k}-\frac{C'_3\sigma_k^{1\overline{1}}}{\sigma_kM}
  &=& \frac{\sigma_k^{1\overline{1}}}{\sigma_k}\left(\frac{\varepsilon_0}{2}-\frac{C'_3}{M}\right)-\frac{\varepsilon_0}{2\lambda_1}\\
  &>&\frac{1}{\lambda_1}\left[\left(\frac{\varepsilon_0}{2}-\frac{C'_3}{M}\right)(c_0+1)-\frac{\varepsilon_0}{2}\right]\geq 0.
\end{eqnarray*}
Thus
\begin{align}\label{im}
   &-\sum_{p\neq q}\frac{\sigma_k^{p\overline{p},q\overline{q}}\omega_{p\overline{p}j}\omega_{q\overline{q}\overline{j}}}{\sigma_k}+K\frac{|D_j\sigma_k|^2}{\sigma_k^2}
  +(1-\varepsilon_0)\sum_{i>1}\frac{\sigma_k^{i\overline{i}}|\omega_{i\overline{i}j}|^2}{\lambda_1\sigma_k}\\
 \nonumber \geq&(2-\varepsilon_0)\frac{(a^2+(1-\varepsilon_0)(1+a)^2)|\omega_{1\overline{1}j}|^2}{\lambda_1}
  \frac{\lambda_1\sigma_k^{1\overline{1}}-\sigma_k}{\lambda_1\sigma_k}
  +\frac{(k+2)^2}{(k+1)(k+3)}\frac{|\omega_{1\overline{1}j}|^2}{\lambda_1^2}\\
  \nonumber=&(2-\varepsilon_0)(a^2+(1-\varepsilon_0)(1+a)^2)\frac{\sigma_k^{1\overline{1}}|\omega_{1\overline{1}j}|^2}{\lambda_1\sigma_k}\\
\nonumber &+\left(\frac{(k+2)^2}{(k+1)(k+3)}-(2-\varepsilon_0)[a^2+(1-\varepsilon_0)(1+a)^2]\right)\frac{|\omega_{1\overline{1}j}|^2}{\lambda_1^2}.
 \end{align}
If $\frac{(k+2)^2}{(k+1)(k+3)}\geq(2-\varepsilon_0)[a^2+(1-\varepsilon_0)(1+a)^2]$, \eqref{im} implies that
\begin{eqnarray*}
&&-\sum_{p\neq q}\frac{\sigma_k^{p\overline{p},q\overline{q}}\omega_{p\overline{p}j}\omega_{q\overline{q}\overline{j}}}{\sigma_k}+K\frac{|D_j\sigma_k|^2}{\sigma_k^2}
  +(1-\varepsilon_0)\sum_{i>1}\frac{\sigma_k^{i\overline{i}}|\omega_{i\overline{i}j}|^2}{\lambda_1\sigma_k}\\
  &\geq&(2-\varepsilon_0)(a^2+(1-\varepsilon_0)(1+a)^2)\frac{\sigma_k^{1\overline{1}}|\omega_{1\overline{1}j}|^2}{\lambda_1\sigma_k}\\
  &\geq&(1-\varepsilon_0)\frac{\sigma_k^{1\overline{1}}|\omega_{1\overline{1}j}|^2}{\lambda_1\sigma_k}.
\end{eqnarray*}
In the last inequality, we use the fact that
\begin{eqnarray*}
  a^2+(1-\varepsilon_0)(1+a)^2&=&(2-\varepsilon_0)\left[\left(a+\frac{1-\varepsilon_0}{1+\varepsilon_0}\right)^2-\frac{(1-\varepsilon_0)^2}
  {(2-\varepsilon_0)^2}+\frac{1-\varepsilon_0}{2-\varepsilon_0}\right] \\
  &\geq& 1-\varepsilon_0-\frac{(1-\varepsilon_0)^2}{2-\varepsilon_0}=\frac{1-\varepsilon_0}{2-\varepsilon_0}.
\end{eqnarray*}
If $\frac{(k+2)^2}{(k+1)(k+3)}<(2-\varepsilon_0)[a^2+(1-\varepsilon_0)(1+a)^2]$, \eqref{im} implies that
\begin{eqnarray*}
&&-\sum_{p\neq q}\frac{\sigma_k^{p\overline{p},q\overline{q}}\omega_{p\overline{p}j}\omega_{q\overline{q}\overline{j}}}{\sigma_k}+K\frac{|D_j\sigma_k|^2}{\sigma_k^2}
  +(1-\varepsilon_0)\sum_{i>1}\frac{\sigma_k^{i\overline{i}}|\omega_{i\overline{i}j}|^2}{\lambda_1\sigma_k}\\
  &>&\left(1+\frac{1}{(k+1)(k+3)}\right)\frac{\sigma_k^{1\overline{1}}|\omega_{1\overline{1}j}|^2}{\lambda_1\sigma_k}
  >\left(1-\varepsilon_0\right)\frac{\sigma_k^{1\overline{1}}|\omega_{1\overline{1}j}|^2}{\lambda_1\sigma_k}.
\end{eqnarray*}
Hence we obtain \eqref{key} and complete the proof.

\section{Proof of Theorem \ref{main}}

In this section, we prove Theorem \ref{main}.
Consider the auxiliary function
\begin{eqnarray*}
Q=\log \lambda_1+\varphi(|D u|^2)+\phi(u),
\end{eqnarray*}
where $\lambda_1$ is the largest eigenvalue of the matrix $\{\chi_{i\overline{j}}\}$.

Define
$$\varphi(s)=e^{Ns}, \quad 0\leq s\leq S-1,$$
and
$$\phi(t)=e^{\Lambda(-t+T)}, \quad -T+1\leq t\leq T-1.$$
Here we set
$$S=\sup_M|D u|^2+1, \quad T=\sup_M|u|+1,$$
and $\Lambda, N>1$ are large constants to be determined later. Clearly, $\varphi, \phi$ satisfy
$$\varphi'=N\varphi>0, \quad \varphi''=\frac{(\varphi')^2}{\varphi},$$
$$\phi'=-\Lambda\phi<0, \quad \phi''=\frac{(\phi')^2}{\phi},$$
and
\begin{equation}\label{ine}
  \varphi''-2\phi''\frac{(\varphi')^2}{(\phi')^2}=\varphi''-2\frac{(\varphi')^2}{\phi}=N^2e^{Ns}-\frac{2N^2e^{2Ns}}{e^{\Lambda(-t+T)}}>0,
\end{equation}
when $\Lambda\gg N>1$.
Suppose $Q$ attains its maximum at $z_0 \in M$. Denote $(\lambda_1,\lambda_2,\cdots,\lambda_n)$ are the eigenvalues of the matrix $\{\chi_{i\overline{j}}\}$ with respect to $\omega$. Without loss of generality, assume $\lambda_1$ has multiplicity $m$, we choose the coordinate system centered at $z_0$ such that $\omega=\sqrt{-1}\delta_{kl}dz^k\wedge d\overline{z}^l$ and $\{\chi_{i\overline{j}}\}$ is diagonal with
$$\lambda_1=\lambda_2=\cdots=\lambda_m\geq\lambda_{m+1}\geq\cdots\geq\lambda_n.$$
We now apply a perturbation argument(see \cite{Chu21}). Let $B$ be a matrix satisfying at $z_0$
$$B_{i\overline{j}}=\delta_{ij}(1-\delta_{1i}),\quad B_{i\overline{j}p}=B_{1\overline{1}i\overline{i}}=0.$$
Define $\widetilde{\chi}_{i\overline{j}}=\chi_{i\overline{j}}-B_{i\overline{j}}$ and denote its eigenvalues by $\widetilde{\lambda}=(\widetilde{\lambda}_1,\widetilde{\lambda}_2,\cdots,\widetilde{\lambda}_n)$. It follows that $\lambda_1\geq\widetilde{\lambda}_1$ near $z_0$ and at $z_0$
$$\widetilde{\lambda}_1=\lambda_1,\quad \widetilde{\lambda}_i=\lambda_i-1 \quad\mbox{for}~i>1.$$
Thus $\widetilde{\lambda}_1>\widetilde{\lambda}_2$ at $z_0$, then $\widetilde{\lambda}_1$ is smooth at $z_0$. We consider the new function
$$\widetilde{Q}=\log \widetilde{\lambda}_1+\varphi(|D u|^2)+\phi(u).$$
It still achieves a local maximum at $z_0$. Then at $z_0$, we have
\begin{equation}\label{Qi}
  0=\widetilde{Q}_i=\frac{\widetilde{\lambda}_{1,i}}{\lambda_1}+\varphi'D_i(|D u|^2)+\phi'u_i,
\end{equation}
and
\begin{eqnarray}\label{Qii}
  \nonumber0\geq\sigma_k^{i\overline{i}}\widetilde{Q}_{i\overline{i}}&=&\sigma_k^{i\overline{i}}(\log \widetilde{\lambda}_1)_{i\overline{i}}+\varphi''\sigma_k^{i\overline{i}}|D_i(|D u|^2)|^2\\
  &&+\varphi'\sigma_k^{i\overline{i}}|Du|_{i\overline{i}}^2
  +\phi''\sigma_k^{i\overline{i}}|u_i|^2+\phi'\sigma_k^{i\overline{i}}u_{i\overline{i}}.
\end{eqnarray}
The following calculations are all at $z_0$. For convenience, we will use a unified notation $C$ to denote a constant depending on $(M, \omega),n,k,\varepsilon,\chi',\psi,a,\sup_M|u|$ and $\sup_M|Du|$.

First, we deal with the term $\sigma_k^{i\overline{i}}(\log\widetilde{\lambda}_1)_{i\overline{i}}$ in \eqref{Qii}.
Thus
$$\widetilde{\lambda}_{1,i}=\frac{\partial\widetilde{\lambda}_1}{\partial \widetilde{\chi}_{p\overline{q}}}D_i\widetilde{\chi}_{p\overline{q}}=\delta_{1p}\delta_{1q}D_i\widetilde{\chi}_{p\overline{q}}
=D_i\widetilde{\chi}_{1\overline{1}}=D_i\chi_{1\overline{1}},$$
and
\begin{eqnarray*}
\widetilde{\lambda}_{1,i\overline{i}}&=&\frac{\partial\widetilde{\lambda}_1}{\partial\widetilde{\chi}_{p\overline{q}}}D_{\overline{i}}
D_i\widetilde{\chi}_{p\overline{q}}
+\frac{\partial^2\widetilde{\lambda}_1}{\partial\widetilde{\chi}_{p\overline{q}}
\partial\widetilde{\chi}_{r\overline{s}}}D_i\widetilde{\chi}_{p\overline{q}}D_{\overline{i}}\widetilde{\chi}_{r\overline{s}}\\
&=&\delta_{1p}\delta_{1q}D_{\overline{i}}D_i\widetilde{\chi}_{p\overline{q}}+\left[(1-\delta_{1p})\frac{\delta_{1q}\delta_{1r}\delta_{ps}}
{\widetilde{\lambda}_1-\widetilde{\lambda}_p}+(1-\delta_{1r})\frac{\delta_{1s}\delta_{1p}\delta_{qr}}
{\widetilde{\lambda}_1-\widetilde{\lambda}_r}\right]D_i\widetilde{\chi}_{p\overline{q}}D_{\overline{i}}\widetilde{\chi}_{r\overline{s}}\\
&=&D_{\overline{i}}D_i\widetilde{\chi}_{1\overline{1}}+\sum_{p>1}\frac{|D_i\widetilde{\chi}_{p\overline{1}}|^2
+|D_i\widetilde{\chi}_{1\overline{p}}|^2}{\widetilde{\lambda}_1-\widetilde{\lambda}_p}
=D_{\overline{i}}D_i\chi_{1\overline{1}}+\sum_{p>1}\frac{|D_i\chi_{p\overline{1}}|^2
+|D_i\chi_{1\overline{p}}|^2}{\lambda_1-\widetilde{\lambda}_p}.
\end{eqnarray*}
Using Cauchy-Schwarz inequality, we have
\begin{equation}\label{cs}
  |\sigma_k^{p\overline{q}}(a_{ip}u_{\overline{j}\overline{q}}+a_{\overline{j}\overline{q}}u_{ip}-a_{pi}u_{\overline{q}\overline{j}}
-a_{\overline{q}\overline{j}}u_{pi})|\leq \frac{1}{4}|D\overline{D}u|^2_{\sigma\omega}+\frac{1}{4}|DDu|^2_{\sigma\omega}+C\mathcal{F}.
\end{equation}
Then combining with \eqref{equ} and \eqref{cs}, we derive
\begin{align}
  \sigma_k^{i\overline{i}}(\log \widetilde{\lambda}_1)_{i\overline{i}}=&\frac{\sigma_k^{i\overline{i}}\widetilde{\lambda}_{1,i\overline{i}}}{\lambda_1}
  -\frac{\sigma_k^{i\overline{i}}|\widetilde{\lambda}_{1,i}|^2}{\lambda_1^2}\\
  \nonumber=&\frac{\sigma_k^{i\overline{i}}D_{\overline{i}}D_i\chi_{1\overline{1}}}{\lambda_1}+\sum_{p>1}\sigma_k^{i\overline{i}}
  \frac{|D_i\chi_{p\overline{1}}|^2+|D_i\chi_{1\overline{p}}|^2}{\lambda_1(\lambda_1-\widetilde{\lambda}_p)}
  -\frac{\sigma_k^{i\overline{i}}|D_i\chi_{1\overline{1}}|^2}{\lambda_1^2}\\
 \nonumber \geq&-\frac{\sigma_k^{i\overline{j},r\overline{s}}D_1\chi_{i\overline{j}}D_{\overline{1}}\chi_{r\overline{s}}}{\lambda_1}
  +\frac{1}{\lambda_1}\left(\sum_{\ell}\psi_{v_{\ell}}D_{\ell}\chi_{1\overline{1}}+\sum_{\ell}\psi_{\overline{v}_{\ell}}
  D_{\overline{\ell}}\chi_{1\overline{1}}\right)\\
  \nonumber&-\frac{1}{\lambda_1}\sigma_k^{i\overline{i}}(T^a_{i1}u_{a\overline{i}\overline{1}}+\overline{T^a_{i1}}
  u_{i\overline{a}1})-\frac{1}{\lambda_1}\left(\frac{1}{4}|DDu|_{\sigma\omega}^2+\frac{1}{4}|D\overline{D}u|_{\sigma\omega}^2\right)\\
 \nonumber &+\frac{1}{\lambda_1}\sigma_k^{i\overline{i}}(a_1u_{\overline{1}i\overline{i}}+a_{\overline{1}}u_{1i\overline{i}}
  -a_iu_{\overline{i}1\overline{1}}-a_{\overline{i}}u_{i1\overline{1}})+\sum_{p>1}\sigma_k^{i\overline{i}}
  \frac{|D_i\chi_{p\overline{1}}|^2+|D_i\chi_{1\overline{p}}|^2}{\lambda_1(\lambda_1-\widetilde{\lambda}_p)}\\
  \nonumber&-\frac{\sigma_k^{i\overline{i}}|D_i\chi_{1\overline{1}}|^2}{\lambda_1^2}-\frac{C}{\lambda_1}(1+|DDu|^2+|D\overline{D}u|^2)-C-C\mathcal{F}.
\end{align}
Using \eqref{derivative of Du} and \eqref{Qi}, we get
\begin{align}\label{la}
  &\frac{1}{\lambda_1}\left(\sum_{\ell}\psi_{v_{\ell}}D_{\ell}\chi_{1\overline{1}}+\sum_{\ell}\psi_{\overline{v}_{\ell}}
  D_{\overline{\ell}}\chi_{1\overline{1}}\right)+\varphi'\sigma_k^{i\overline{i}}|Du|_{i\overline{i}}^2\\
  \nonumber\geq &\frac{1}{\lambda_1}\left(\sum_{\ell}\psi_{v_{\ell}}D_{\ell}\chi_{1\overline{1}}+\sum_{\ell}\psi_{\overline{v}_{\ell}}
  D_{\overline{\ell}}\chi_{1\overline{1}}\right)+\frac{\varphi'}{2}|D D u|_{\sigma \omega}^{2}+\frac{\varphi'}{2}|D \ov{D} u|_{\sigma \omega}^{2}\\
  \nonumber&+2\varphi' \operatorname{Re}\left\{\sum_{p, m}\psi_{v_{m}}\left(u_{\overline{p}}u_{mp}+u_pu_{m\overline{p}}\right) \right\}-C\varphi'-C\varphi'\mathcal{F}\\
  \nonumber\geq&-\phi'\sum_m(\psi_{v_m}u_m+\psi_{\overline{v}_m}u_{\overline{m}})+\frac{\varphi'}{2}|D D u|_{\sigma \omega}^{2}+\frac{\varphi'}{2}|D \ov{D} u|_{\sigma \omega}^{2}-C\varphi'-C\varphi'\mathcal{F}\\
  \nonumber\geq&\frac{\varphi'}{2}|D D u|_{\sigma \omega}^{2}+\frac{\varphi'}{2}|D \ov{D} u|_{\sigma \omega}^{2}
  +C\phi'-C\varphi'-C\varphi'\mathcal{F}.
\end{align}
Since $\lambda_i\geq-A$ for any $i=1,\cdots, n$. Assume that $\lambda_1\geq\max\{1,A\}$ large enough, thus
$$\frac{1}{\lambda_1-\widetilde{\lambda}_i}=\frac{1}{\lambda_1-\lambda_i+1}\geq\frac{1}{3\lambda_1},$$
then by Cauchy-Schwarz inequality, for any $\beta\in(0,1)$,
\begin{align}\label{1}
  &\frac{1}{\lambda_1}\sigma_k^{i\overline{i}}|T^a_{i1}u_{a\overline{i}\overline{1}}+\overline{T^a_{i1}}u_{i\overline{a}1}|\\
  \nonumber=&\frac{2}{\lambda_1}
  \sigma_k^{i\overline{i}}\left|\operatorname{Re}\left\{\overline{T^p_{i1}}u_{i\overline{p}1}\right\}\right|\leq\frac{2}{\lambda_1}
  \sigma_k^{i\overline{i}}\left|\overline{T^p_{i1}}(u_{1\overline{p}i}+T^q_{i1}u_{q\overline{p}})\right|\\
  \nonumber\leq&\frac{2}{\lambda_1}\sigma_k^{i\overline{i}}\left|\overline{T^p_{i1}}D_i\chi_{1\overline{p}}\right|
  +\frac{C}{\lambda_1}(|DDu|_{\sigma\omega}^2+|D\overline{D}u|_{\sigma\omega}^2)+C\mathcal{F}\\
  \nonumber\leq&\frac{\beta}{6}\sum_p\frac{\sigma_k^{i\overline{i}}|D_i\chi_{1\overline{p}}|^2}{\lambda_1^2}+\frac{C}{\beta}\mathcal{F}
  +\frac{C}{\lambda_1}(|DDu|_{\sigma\omega}^2+|D\overline{D}u|_{\sigma\omega}^2)\\
  \nonumber\leq&\frac{\beta}{2}\frac{\sigma_k^{i\overline{i}}|D_i\chi_{1\overline{1}}|^2}{\lambda_1^2}+
  \frac{\beta}{2}\sum_{p>1}\frac{\sigma_k^{i\overline{i}}|D_i\chi_{1\overline{p}}|^2}{\lambda_1(\lambda_1-\widetilde{\lambda}_p)}
  +\frac{C}{\beta}\mathcal{F}+\frac{C}{\lambda_1}(|DDu|_{\sigma\omega}^2+|D\overline{D}u|_{\sigma\omega}^2).
\end{align}
We also have
\begin{align}\label{2}
  & \frac{1}{\lambda_1}\sigma_k^{i\overline{i}}|a_1u_{\overline{1}i\overline{i}}+a_{\overline{1}}u_{1i\overline{i}}
  -a_iu_{\overline{i}1\overline{1}}-a_{\overline{i}}u_{i1\overline{1}}|\\
  \nonumber\leq &\frac{\beta}{6}\left(\frac{\sigma_k^{i\overline{i}}|D_i\chi_{1\overline{i}}|^2}{\lambda_1^2}
  +\frac{\sigma_k^{i\overline{i}}|D_i\chi_{1\overline{1}}|^2}{\lambda_1^2}\right)+\frac{C}{\lambda_1}(|DDu|_{\sigma\omega}^2
  +|D\overline{D}u|_{\sigma\omega}^2)+\frac{C}{\beta}\mathcal{F}\\
 \nonumber \leq&\frac{\beta}{6}\sum_p\frac{\sigma_k^{i\overline{i}}|D_i\chi_{1\overline{p}}|^2}{\lambda_1^2}
  +\frac{C}{\beta}\mathcal{F}+\frac{C}{\lambda_1}(|DDu|_{\sigma\omega}^2+|D\overline{D}u|_{\sigma\omega}^2)\\
 \nonumber \leq&\frac{\beta}{2}\frac{\sigma_k^{i\overline{i}}|D_i\chi_{1\overline{1}}|^2}{\lambda_1^2}
  +\frac{\beta}{2}\sum_{p>1}\frac{\sigma_k^{i\overline{i}}|D_i\chi_{1\overline{p}}|^2}{\lambda_1(\lambda_1-\widetilde{\lambda}_p)}
  +\frac{C}{\beta}\mathcal{F}+\frac{C}{\lambda_1}(|DDu|_{\sigma\omega}^2+|D\overline{D}u|_{\sigma\omega}^2).
\end{align}
Using the critical equation \eqref{Qi}, we obtain
\begin{equation}\label{3}
  \phi''\sigma_k^{i\overline{i}}|u_i|^2\geq\frac{\phi''}{2(\phi')^2}\frac{\sigma_k^{i\overline{i}}|D_i\chi_{1\overline{1}}|^2}{\lambda_1^2}
  -\phi''\left(\frac{\varphi'}{\phi'}\right)^2\sigma_k^{i\overline{i}}|D_i(|Du|^2)|^2,
\end{equation}
and
\begin{align}\label{4}
  \left|\phi'\sigma_k^{i\overline{i}}(a_iu_{\overline{i}}+a_{\overline{i}}u_i)\right| =&2\left|\phi'\sigma_k^{i\overline{i}}\operatorname{Re}\left\{a_iu_{\overline{i}}\right\}\right| \\
 \nonumber =&2\left|\sigma_k^{i\overline{i}}\operatorname{Re}\left\{a_i\left(\frac{D_{\overline{i}}\chi_{1\overline{1}}}{\lambda_1}+\varphi'D_{\overline{i}}
  (|Du|^2)\right)\right\}\right|\\
 \nonumber  \leq&\frac{\phi''}{8(\phi')^2}\sigma_k^{i\overline{i}}\left|\frac{D_{\overline{i}}\chi_{1\overline{1}}}{\lambda_1}+\varphi'D_{\overline{i}}
  (|Du|^2)\right|^2+\frac{C(\phi')^2}{\phi''}\mathcal{F}\\
  \nonumber \leq&\frac{\phi''}{4(\phi')^2}\frac{\sigma_k^{i\overline{i}}|D_{\overline{i}}\chi_{1\overline{1}}|^2}{\lambda_1^2}
  +(\varphi')^2\frac{\phi''}{4(\phi')^2}\sigma_k^{i\overline{i}}|D_i(|Du|^2)|^2+\frac{C(\phi')^2}{\phi''}\mathcal{F}.
\end{align}
Since $\chi'(z,u)\geq \varepsilon\omega$, then
\begin{eqnarray}\label{u}
\nonumber-\sigma_{k}^{p \overline{q}} u_{p \overline{q}}&=&\sigma_{k}^{p \overline{q}}\left(\chi'_{p \overline{q}}+a_pu_{\overline{q}}+a_{\overline{q}}u_p-\chi_{p \overline{q}}\right)\\
&\geq& \varepsilon \mathcal{F}+\sigma_{k}^{p \overline{q}}
(a_pu_{\overline{q}}+a_{\overline{q}}u_p)-k \psi.
\end{eqnarray}
Combining with \eqref{Qii}-\eqref{u}, we get
\begin{align}\label{5}
  0\geq& -\frac{1}{\lambda_1}\sigma_k^{i\overline{j},r\overline{s}}D_1\chi_{i\overline{j}}D_{\overline{1}}\chi_{r\overline{s}}
  +\left(\frac{\varphi'}{2}-\frac{C}{\lambda_1}\right)\left(|DDu|_{\sigma\omega}^2+|D\overline{D}u|_{\sigma\omega}^2\right) \\
  \nonumber&+(1-\beta)\sum_{p>1}\sigma_k^{i\overline{i}}\frac{|D_i\chi_{p\overline{1}}|^2
  +|D_i\chi_{1\overline{p}}|^2}{\lambda_1(\lambda_1-\widetilde{\lambda}_p)}-\left(1+\beta-\frac{\phi''}{4(\phi')^2}\right)
  \frac{\sigma_k^{i\overline{i}}|D_i\chi_{1\overline{1}}|^2}{\lambda_1^2}\\
  \nonumber&+\left(\varphi''-2\phi''\frac{(\varphi')^2}{(\phi')^2}\right)\sigma_k^{i\overline{i}}|D_i(|Du|^2)|^2
  -\frac{C}{\lambda_1}(|DDu|^2+|D\overline{D}u|^2)\\
  \nonumber&+\left(-\varepsilon\phi'-C\varphi'-\frac{C(\phi')^2}{\phi''}-\frac{C}{\beta}\right)\mathcal{F}+C\phi'-C\varphi'-C.
\end{align}
According to \eqref{symmetric func 2th deriv},
\begin{align}\label{6}
  &-\frac{1}{\lambda_1}\sigma_k^{i\overline{j},r\overline{s}}D_1\chi_{i\overline{j}}D_{\overline{1}}\chi_{r\overline{s}}
  +(1-\beta)\sum_{p>1}\sigma_k^{i\overline{i}}\frac{|D_i\chi_{p\overline{1}}|^2
  +|D_i\chi_{1\overline{p}}|^2}{\lambda_1(\lambda_1-\widetilde{\lambda}_p)}\\
  \nonumber\geq&-\frac{1}{\lambda_1}\sum_{p\neq q}
  \sigma_k^{p\overline{p},q\overline{q}}D_1\chi_{p\overline{p}}D_{\overline{1}}\chi_{q\overline{q}}+\frac{1}{\lambda_1}\sum_{p\neq q}\frac{\sigma_k^{p\overline{p}}-\sigma_k^{q\overline{q}}}{\lambda_q-\lambda_p}|D_1\chi_{p\overline{q}}|^2\\
  \nonumber&+(1-\beta)\sum_{p>1}\sigma_k^{1\overline{1}}\frac{|D_1\chi_{p\overline{1}}|^2
  +|D_1\chi_{1\overline{p}}|^2}{\lambda_1(\lambda_1-\widetilde{\lambda}_p)}
  +(1-\beta)\sum_{p>1}\sigma_k^{p\overline{p}}\frac{|D_p\chi_{p\overline{1}}|^2
  +|D_p\chi_{1\overline{p}}|^2}{\lambda_1(\lambda_1-\widetilde{\lambda}_p)}\\
 \nonumber \geq&-\frac{1}{\lambda_1}\sum_{p\neq q}\sigma_k^{p\overline{p},q\overline{q}}D_1\chi_{p\overline{p}}D_{\overline{1}}\chi_{q\overline{q}}
  +\sum_{p>m}\frac{\sigma_k^{p\overline{p}}-\sigma_k^{1\overline{1}}}{\lambda_1(\lambda_1-\lambda_p)}|D_1\chi_{p\overline{1}}|^2\\
  \nonumber&+(1-\beta)\sum_{p>1}\frac{\sigma_k^{1\overline{1}}|D_1\chi_{p\overline{1}}|^2}{\lambda_1(\lambda_1-\widetilde{\lambda}_p)}
  +(1-\beta)\sum_{p>1}\frac{\sigma_k^{p\overline{p}}|D_p\chi_{1\overline{p}}|^2}{\lambda_1(\lambda_1-\widetilde{\lambda}_p)}.
\end{align}
On the one hand, by Cauchy-Schwarz inequality, for any $\beta\in(0,1)$,
\begin{align}\label{7}
  & \sum_{p>m}\frac{\sigma_k^{p\overline{p}}-\sigma_k^{1\overline{1}}}{\lambda_1(\lambda_1-\lambda_p)}|D_1\chi_{p\overline{1}}|^2
  +(1-\beta)\sum_{p>1}\frac{\sigma_k^{1\overline{1}}|D_1\chi_{p\overline{1}}|^2}{\lambda_1(\lambda_1-\widetilde{\lambda}_p)}
  -(1-3\beta)\sum_{p>1}\frac{\sigma_k^{p\overline{p}}|D_p\chi_{1\overline{1}}|^2}{\lambda_1^2} \\
  \nonumber \geq&(1-\beta)\sum_{p>m}\frac{\sigma_k^{p\overline{p}}-\sigma_k^{1\overline{1}}}{\lambda_1(\lambda_1-\lambda_p)}|D_1\chi_{p\overline{1}}|^2
   +(1-\beta)\sum_{p>1}\frac{\sigma_k^{1\overline{1}}|D_1\chi_{p\overline{1}}|^2}{\lambda_1(\lambda_1-\widetilde{\lambda}_p)}\\
  \nonumber &-(1-3\beta)(1+\beta)\sum_{p>1}\frac{\sigma_k^{p\overline{p}}|D_1\chi_{p\overline{1}}|^2}{\lambda_1^2}
   -C(1-3\beta)\left(1+\frac{1}{\beta}\right)\mathcal{F}\\
  \nonumber \geq&(1-\beta)\left[\sum_{p>m}\frac{\sigma_k^{p\overline{p}}-\sigma_k^{1\overline{1}}}{\lambda_1(\lambda_1-\lambda_p)}
   |D_1\chi_{p\overline{1}}|^2+\sum_{p>1}\frac{\sigma_k^{1\overline{1}}|D_1\chi_{p\overline{1}}|^2}{\lambda_1(\lambda_1-\widetilde{\lambda}_p)}
   -(1-\beta)\sum_{p>1}\frac{\sigma_k^{p\overline{p}}|D_1\chi_{p\overline{1}}|^2}{\lambda_1^2}\right]-\frac{C}{\beta}\mathcal{F}\\
   \nonumber\geq&-\frac{C}{\beta}\mathcal{F}.
\end{align}
In the last inequality, we use the fact
\begin{align*}
   &\sum_{p>m}\frac{\sigma_k^{p\overline{p}}-\sigma_k^{1\overline{1}}}{\lambda_1(\lambda_1-\lambda_p)}
   |D_1\chi_{p\overline{1}}|^2+\sum_{p>1}\frac{\sigma_k^{1\overline{1}}|D_1\chi_{p\overline{1}}|^2}{\lambda_1(\lambda_1-\widetilde{\lambda}_p)}
   -(1-\beta)\sum_{p>1}\frac{\sigma_k^{p\overline{p}}|D_1\chi_{p\overline{1}}|^2}{\lambda_1^2} \\
  =&\sum_{p>m}\frac{\sigma_k^{p\overline{p}}-\sigma_k^{1\overline{1}}}{\lambda_1(\lambda_1-\lambda_p)}
   |D_1\chi_{p\overline{1}}|^2+\sum_{1<i\leq m}\frac{\sigma_k^{1\overline{1}}|D_1\chi_{i\overline{1}}|^2}{\lambda_1(\lambda_1-\widetilde{\lambda}_i)}
   +\sum_{p>m}\frac{\sigma_k^{1\overline{1}}|D_1\chi_{p\overline{1}}|^2}{\lambda_1(\lambda_1-\widetilde{\lambda}_p)}\\
   &-(1-\beta)\sum_{1<i\leq m}\frac{\sigma_k^{1\overline{1}}|D_1\chi_{i\overline{1}}|^2}{\lambda_1^2}
   -(1-\beta)\sum_{p>m}\frac{\sigma_k^{p\overline{p}}|D_1\chi_{p\overline{1}}|^2}{\lambda_1^2}\\
   =&\sum_{1<i\leq m}\frac{\beta\lambda_1+(1-\beta)\widetilde{\lambda}_i}{\lambda_1^2(\lambda_1-\widetilde{\lambda}_i)}\sigma_k^{1\overline{1}}
   |D_1\chi_{i\overline{1}}|^2+\sum_{p>m}\frac{\beta\lambda_1
   +(1-\beta)\lambda_p}{\lambda_1^2(\lambda_1-\lambda_p)}\sigma_k^{p\overline{p}}|D_1\chi_{p\overline{1}}|^2\\
   &+\sum_{p>m}\frac{\widetilde{\lambda}_p-\lambda_p}{\lambda_1(\lambda_1-\lambda_p)(\lambda_1-\widetilde{\lambda}_p)}\sigma_k^{1\overline{1}}
   |D_1\chi_{p\overline{1}}|^2\\
   \geq &\sum_{1<i\leq m}\frac{\beta\lambda_1+(1-\beta)\widetilde{\lambda}_i}{\lambda_1^2(\lambda_1-\widetilde{\lambda}_i)}\sigma_k^{1\overline{1}}
   |D_1\chi_{i\overline{1}}|^2+\sum_{p>m}\frac{\beta\lambda_1+(1-\beta)\widetilde{\lambda}_p}
   {\lambda_1^2(\lambda_1-\widetilde{\lambda}_p)}\sigma_k^{1\overline{1}}|D_1\chi_{p\overline{1}}|^2 \geq0,
\end{align*}
by choosing $\lambda_1\geq\frac{1-\beta}{\beta}(A+1)$ large enough such that $\beta\lambda_1+(1-\beta)\widetilde{\lambda}_i\geq 0$ for any $i$.

On the other hand, according to Lemma \ref{key lemma} and choosing $\varepsilon_0=3\beta$ and $\lambda_1\geq\frac{1-2\beta}{\beta}(A+1)$ large enough, we derive
\begin{align}\label{8}
  & -\frac{1}{\lambda_1}\sum_{p\neq q}\sigma_k^{p\overline{p},q\overline{q}}D_1\chi_{p\overline{p}}D_{\overline{1}}\chi_{q\overline{q}}
  +(1-\beta)\sum_{p>1}\frac{\sigma_k^{p\overline{p}}|D_p\chi_{1\overline{p}}|^2}{\lambda_1(\lambda_1-\widetilde{\lambda}_p)}
  -(1-3\beta)\frac{\sigma_k^{1\overline{1}}|D_1\chi_{1\overline{1}}|^2}{\lambda_1^2}\\
  \nonumber\geq& -K\frac{|D_1\psi|^2}{\lambda_1\sigma_k}+(1-\varepsilon_0)\frac{\sigma_k^{1\overline{1}}|D_1\chi_{1\overline{1}}|^2}{\lambda_1^2}
  -(1-\varepsilon_0)\sum_{p>1}\frac{\sigma_k^{p\overline{p}}|D_1\chi_{p\overline{p}}|^2}{\lambda_1^2}\\
  \nonumber&+(1-\beta)\sum_{p>1}\frac{\sigma_k^{p\overline{p}}|D_p\chi_{1\overline{p}}|^2}{\lambda_1(\lambda_1-\widetilde{\lambda}_p)}
  -(1-3\beta)\frac{\sigma_k^{1\overline{1}}|D_1\chi_{1\overline{1}}|^2}{\lambda_1^2}\\
  \nonumber\geq&-\frac{CK}{\lambda_1}(|DDu|^2+|D\overline{D}u|^2)+(1-\beta)\sum_{p>1}\frac{\sigma_k^{p\overline{p}}|D_p\chi_{1\overline{p}}|^2}
  {\lambda_1(\lambda_1-\widetilde{\lambda}_p)}\\
  \nonumber&-(1-3\beta)(1+\beta)\sum_{p>1}\frac{\sigma_k^{p\overline{p}}|D_p\chi_{1\overline{p}}|^2}
  {\lambda_1^2}-\frac{C}{\beta}\mathcal{F}\\
  \nonumber\geq&\sum_{p>1}\frac{\beta\lambda_1+(1-2\beta)\widetilde{\lambda}_p}{\lambda_1^2(\lambda_1-\widetilde{\lambda}_p)}
  \sigma_k^{p\overline{p}}|D_p\chi_{1\overline{p}}|^2-\frac{CK}{\lambda_1}(|DDu|^2+|D\overline{D}u|^2)-\frac{C}{\beta}\mathcal{F}\\
  \nonumber\geq&-\frac{CK}{\lambda_1}(|DDu|^2+|D\overline{D}u|^2)-\frac{C}{\beta}\mathcal{F}.
\end{align}
Since $\sigma_k^{i\overline{i}}\geq\sigma_k^{1\overline{1}}\geq\frac{k\sigma_k}{n\lambda_1}\geq\frac{C}{\lambda_1}$ for any $i$, then
\begin{equation}\label{9}
  |DDu|_{\sigma\omega}^2+|D\overline{D}u|_{\sigma\omega}^2\geq\frac{C}{\lambda_1}(|DDu|^2+|D\overline{D}u|^2)\geq\frac{C}{\lambda_1}|DDu|^2
  +C\lambda_1.
\end{equation}
Combining with \eqref{ine} and \eqref{5}-\eqref{9}, we obtain
\begin{align}\label{10}
  0\geq & \left(\frac{\phi''}{4(\phi')^2}-4\beta\right)\frac{\sigma_k^{i\overline{i}}|D_i\chi_{1\overline{1}}|^2}{\lambda_1^2}
  +\left(\frac{\varphi'}{2}-\frac{C}{\lambda_1}\right)(|DDu|_{\sigma\omega}^2+|D\overline{D}u|_{\sigma\omega}^2)\\
  \nonumber& -\frac{C(K+1)}{\lambda_1}(|DDu|^2+|D\overline{D}u|^2)+\left(-\varepsilon\phi'-C\varphi'-\frac{C(\phi')^2}{\phi''}
  -\frac{C}{\beta}\right)\mathcal{F}\\
 \nonumber &+C\phi'-C\varphi'-C\\
  \nonumber\geq&\left(\frac{C\varphi'}{\lambda_1}-\frac{C(K+1)}{\lambda_1}\right)|DDu|^2+\left(C\varphi'-C(K+1)\right)\lambda_1\\
 \nonumber &+(-\varepsilon\phi'-C\varphi'-C\phi)\mathcal{F}+C\phi'-C\varphi'-C,
\end{align}
by choosing $\varepsilon_0=3\beta=\frac{3}{16e^{\Lambda(-u(z_0)+T)}}=\frac{3\phi''}{16(\phi')^2}(z_0)$ and $\lambda_1\geq\frac{4C}{N}$ large enough such that $\frac{\varphi'}{2}-\frac{C}{\lambda_1}\geq\frac{\varphi'}{4}$. Then choosing $N$ large enough such that $C\varphi'-C(K+1)\geq1$ and $\Lambda\gg N$ such that
$$-\varepsilon\phi'-C\varphi'-C\phi=\varepsilon\Lambda\phi-CN\varphi-C\phi>0.$$
Thus \eqref{10} implies that
$$\lambda_1\leq -C\phi'+C\varphi'+C.$$
The proof of Theorem \ref{main} is completed.

 \bibliographystyle{siam}

\begin{thebibliography}{99}
\bibitem{AV10}
S. Alesker, M. Verbitsky, Quaternionic Monge-Amp\`{e}re equation and Calabi problem for
HKT-manifolds, Israel J. Math., 176(2010), 109-138.
\bibitem{Ba84}
J. Ball, Differentiability properties of symmetric and isotropic functions, Duke Math. J., 51(1984), 699-728.
\bibitem{B05}
Z. Blocki, Weak solutions to the complex Hessian equation, Ann. Inst. Fourier (Grenoble), 55(2005), 1735-1756.
\bibitem{CNS85}
L. Caffarelli, L. Nirenberg, J. Spruck, The Dirichlet problem for
nonlinear second order elliptic equations, III: Functions of the
eigenvalues of the Hessian, Acta Math., 155(1985), 261-301.
\bibitem{Ch}
P. Cherrier, \'{E}quations de Monge-Amp\`{e}re sur les vari\'{e}t\'{e}s Hermitiennes compactes,
Bull. Sc. Math., 111(1987), 343-385.
\bibitem{Chu21}
J. Chu, A simple proof of curvature estimates for convex solution of $k$-Hessian equation, Proc. Amer. Math. Soc., 149(2021), 3541-3552.
\bibitem{CHZ19}
J. Chu, L. Huang, X. Zhu, The Fu-Yau equation in higher dimensions, Peking Math. J., 2(2019), 71-97.
\bibitem{CHZ2019}
J. Chu, L. Huang, X. Zhu, The Fu-Yau equation on compact astheno-K\"{a}hler manifolds, Adv. Math., 346(2019), 908-945.
\bibitem{DK14}
S. Dinew, S. Kolodziej, A priori estimates for complex Hessian equations, Anal. PDE, 7(2014), 227-244.
\bibitem{D21}
W. Dong, Second order estimates for complex Hessian equations with gradient terms on both sides, J. Differ. Equ., 271(2021), 128-151.
\bibitem{Do21}
W. Dong, Second order estimates for a class of complex Hessian equations on Hermitian manifolds, J. Funct. Anal., 281(2021), 109121.
\bibitem{DL19}
W. Dong, C. Li, Second order estimates for complex Hessian equations on Hermitian manifolds, Discre. Cont. Dyna. Syst., 41(2021), 2619-2633.
\bibitem{FGZ19}
K. Feng, H. Ge, T. Zheng, The Dirichlet Problem of Fully Nonlinear
Equations on Hermitian Manifolds, arXiv: 1905.02412, 2019.
\bibitem{FLSV13}
A. Fino, Y. Li, S. Salamon, L. Vezzoni, The Calabi-Yau equation
on 4-manifolds over 2-tori, Trans. Amer. Math. Soc., 365(2013), 1551-1575.
\bibitem{FY07}
J. Fu, S. Yau, A Monge-Amp\`{e}re-type equation motivated by string theory, Comm. Anal. Geom., 15(2007), 29-75.
\bibitem{FY08}
J. Fu, S. Yau, The theory of superstring with flux on non-K\"{a}hler manifolds and the complex
Monge-Amp\`{e}re equation, J. Differ. Geom., 78(2008), 369-428.
\bibitem{Ger06}
C. Gerhardt, Curvature problems, Series in Geometry and Topology, vol.,
International Press of Boston Inc., Sommerville, 39(2006).
\bibitem{GL09}
B. Guan, Q. Li, Complex Monge-Amp\`{e}re equations on Hermitian manifolds, arXiv: 0906.3548, 2009.
\bibitem{GN23}
B. Guan, X. Nie, Fully nonlinear elliptic equations with gradient terms
on Hermitian manifolds, Int. Math. Res. Not., 16(2023), 14006-14042.
\bibitem{Ha}
A. Hanani, \'{E}quations du type de Monge-Amp\`{e}re sur les vari\'{e}t\'{e}s hermitiennes
compactes, J. Funct. Anal., 137(1996), 49-75.
\bibitem{H09}
Z. Hou, Complex Hessian equation on K\"{a}hler manifold, Int. Math. Res. Not., 2009(2009), 3098-3111.
\bibitem{HMW10}
Z. Hou, X. Ma, D. Wu, A second order estimate for complex Hessian equations on a compact K\"{a}hler
manifold, Math. Res. Lett., 17(2010), 547-561.
\bibitem{Hui99}
G. Huisken, C. Sinestrari, Convexity estimates for mean
curvature flow and singularities of mean convex surfaces, Acta Math., 183(1999), 45-70.
\bibitem{KN16}
S. Kolodziej, N. Nguyen, Weak solutions of complex Hessian equations on compact Hermitian manifolds, Compos. Math., 2016.
\bibitem{Li04}
S. Li, On the Dirichlet problems for symmetric function equations of the eigenvalues of the complex Hessian,
Asian J. Math., 8(2004), 87-106.
\bibitem{Lu23}
S. Lu, Curvature estimates for semi-convex solutions of Hessian equations in hyperbolic space, Calc. Var. Partial Differ. Equ., 62(2023), 23.
\bibitem{Li96}
G. Lieberman, Second order parabolic differential equations, World Scientific, 1996.
\bibitem{PPZ16}
D. Phong, S. Picard, X. Zhang, A second order estimate for general complex Hessian equations, Anal. PDE, 9(2016), 1693-1709.
\bibitem{PPZ17}
D. Phong, S. Picard, X. Zhang, The Fu-Yau equation with negative slope parameter, Invent. Math., 209(2017), 541-576.
\bibitem{PPZ19}
D. Phong, S. Picard, X. Zhang, On estimates for the Fu-Yau generalization of a Strominger system, J. Reine Angew.
Math., 751(2019), 243-274.
\bibitem{PPZ21}
D. Phong, S. Picard, X. Zhang, Fu-Yau Hessian equations, J. Differ. Geom., 118(2021), 147-187.
\bibitem{SW08}
J. Song, B. Weinkove, On the convergence and singularities of the $J$-flow with applications to
the Mabuchi energy, Comm. Pure Appl. Math., 61(2008), 210-229.
\bibitem{S05}
J. Spruck, Geometric aspects of the theory of fully nonlinear
elliptic equations, Clay Mathematics Proceedings, 2(2005), 283-309.
\bibitem{S14}
W. Sun, On uniform estimate of complex elliptic equations on closed Hermitian manifolds, arXiv: 1412.5001, 2014.
\bibitem{SZ18}
G. Sz\'{e}kelyhidi, Fully non-linear elliptic equations on compact Hermitian manifolds, J. Differ. Geom., 109(2018), 337-378.
\bibitem{STW17}
G. Sz\'{e}kelyhidi, V. Tosatti, B. Weinkove, Gauduchon metrics
with prescribed volume form, Acta Math., 219(2017), 181-211.
\bibitem{TW19}
V. Tosatti, B. Weinkove, Hermitian metrics, $(n-1,n-1)$ forms and Monge-Amp\`ere equations, J. Reine Angew. Math., 755(2019), 67-101.
\bibitem{TW191}
V. Tosatti, B. Weinkove, The complex Monge-Amp\`{e}re equation with a gradient term, arXiv: 1906.10034, 2019.
\bibitem{TW10}
V. Tosatti, B. Weinkove, The complex Monge-Amp\`{e}re equation on compact Hermitian manifolds,
 J. Amer. Math. Soc., 23(2010), 1187-1195.
\bibitem{Yuan18}
R. Yuan, On a class of fully nonlinear elliptic equations containing gradient terms on compact Hermitian manifolds, Canad. J. Math., 70(2018),
943-960.
\bibitem{Yuan20}
R. Yuan, Regularity of fully nonlinear elliptic equations on Hermitian manifolds. II, arXiv: 2001.09238, 2020.
\bibitem{Z17}
D. Zhang, Hessian equations on closed Hermitian manifolds, Pacific Journal of Mathematics, 291(2017), 485-510.
\bibitem{Z24}
R. Zhang, $C^2$ estimates for $k$-Hessian equations and a rigidity theorem, arXiv: 2408.10781v1, 2024.
\bibitem{Z10}
X. Zhang, A priori estimates for complex Monge-Amp\`{e}re equation on Hermitian manifolds, Int. Math. Res. Not., 2010(2010), 3814-3836.
\bibitem{ZZ11}
X. Zhang, X. Zhang, Regularity estimates of solutions to complex Monge-Amp\`{e}re equations on Hermitian manifolds, J. Funct. Anal., 260(2011), 2004-2026.
\end{thebibliography}

\end{document}